\theoremstyle{plain}
\newtheorem{theorem}{Theorem}
\newtheorem{lemma}[theorem]{Lemma}
\newtheorem{corollary}[theorem]{Corollary}
\newtheorem{definition}[theorem]{Definition}
\numberwithin{theorem}{section}
\numberwithin{equation}{section}
\newcommand{\gM}{\mathfrak{M}}
\newcommand{\C}{\mathbb{C}}
\newcommand{\N}{\mathbb{N}}
\newcommand{\R}{\mathbb{R}}
\newcommand{\T}{\mathbb{T}}
\newcommand{\Z}{\mathbb{Z}}
\newcommand{\cA}{\mathcal{A}}
\newcommand{\cH}{\mathcal{H}}
\newcommand{\cK}{\mathcal{K}}
\newcommand{\GL}{\mathrm{GL}}
\newcommand{\SO}{\mathrm{SO}}
\newcommand{\SU}{\mathrm{SU}}
\newcommand{\U}{\mathrm{U}}
\DeclareMathOperator{\supp}{supp}
\newcommand{\AP}{\mathrm{AP}}
\newcommand{\RUC}{\mathrm{RUC}_b}
\newcommand{\LUC}{\mathrm{LUC}_b}
\newcommand{\UC}{\mathrm{UC}_b}
\newcommand{\du}[2]{\langle#1,#2 \rangle}
\begin{document}
\title[Invariant Means]{A survey of amenability theory for direct-limit groups}

\author{Matthew Dawson}
\address{Department of Mathematics, Louisiana State University, Baton Rouge, LA 70803, U.S.A.}
\curraddr{Centro de Investigaci\'{o}n en Matem\'{a}ticas,  Jalisco s/n, Col. Valenciana, Guajauato, GTO 36240, M\'{e}xico} \email{matthew.dawson@cimat.mx}

\author{Gestur \'{O}lafsson}
\address{Department of Mathematics, Louisiana State University, Baton Rouge, LA 70803, U.S.A.}
\email{olafsson@math.lsu.edu}
\thanks{The research of G. \'Olafsson was supported by NSF grant DMS-1101337.  The research of M. Dawson was partially supported by the NSF VIGRE grant at LSU}

\subjclass[2010]{Primary 43-02, 43A07; Secondary 43A90}
\keywords{Invariant means; amenability; infinite-dimensional groups, spherical functions and representations}

\begin{abstract}
We survey results from amenability theory with an emphasis on applications to harmonic analysis on direct-limit groups.
\end{abstract}

\maketitle
 
\section{Introduction}
\noindent The class of all topological groups is far too general to admit many useful general theorems.  For that reason, a few simplifying assumptions are almost always made in the literature when studying them, so that the groups are ``nice'' enough.  The assumptions come in two broad classes:  separability assumptions, which assure that the topology is ``rich enough,'' as well as compactness and countability assumptions, which assure that the group is not ``too big.''  For instance, topological groups are nearly always assumed to be Hausdorff.

However, the most important assumption commonly made, 
 especially for the purpose of harmonic analysis, is local compactness.  Locally compact groups (and \textit{only} locally compact groups) admit Borel measures invariant under the group action.  This, in turn, provides the existence of a group $C^*$-algebra that carries all of the information from the representation theory of the group. 

Most of the literature on representation theory and harmonic analysis is written from the point of view of separable locally compact groups, even in places where it may not be necessary to make that requirement.  Unfortunately, infinite-dimensional Lie groups are never locally compact.  Hence there is no Haar measure and no hope of a Plancherel formula.  The standard tools of harmonic analysis, such as convolutions and group $C^*$-algebras, appear to break down completely.  

While harmonic analysis may appear at first glance to be at a dead end for infinite-dimensional groups, a surprising amount of progress has been made on their representation theory.  For instance, all separable unitary representations of the full unitary group $U(\cH)$ of a separable Hilbert space $\cH$ have been classified (\cite{PickrellUnitary}).  For direct limits of classical Lie groups, there have been several important results (see, for instance,~\cite{Ol1990}).

More recently, there has also been some progress in finding a good context for studying harmonic analysis on direct-limit groups.  For instance, \cite{Grundling} constructed a suitable group $C^*$-algebra for certain direct-limit groups.  See also \cite{LL}.  A very natural construction has also been used by Olshanski, Borodin, Kerov, and Vershik to prove a sort of Plancherel theorem for certain direct-limit groups (\cite{BO, KOV}). The basic ideas of the construction of the regular representation seem to originate from a paper by Pickrell (\cite{PickrellGrassmann}).

We briefly summarize the construction here for the purposes of comparison with the ideas discussed in this paper. One begins with an increasing chain $\{G_k\}_{k\in\N}$ of compact groups and considers the direct-limit group $G\equiv \varinjlim G_k = \cup_{k\in \N} G_k$. Denote the inclusion maps by $i_n:G_n\rightarrow G_{n+1}$.  Next, one attempts to construct projections $p_n:G_{n+1}\rightarrow G_n$ for each $n$ such that $p_n$ is $G_n$-equivariant and $p_n\circ i_n = \mathrm{Id}$.  It may not be possible to construct such a collection of projections that are continuous, but they should at least be measurable.  These projections determine a projective limit space $\overline{G} = \varprojlim G_n$, in which $G$ is a nowhere-dense subset.  

Next, one considers the normalized Haar measure $\mu_n$ on $G_n$ for each $n$.  Because each projection $p_n$ is $G_n$-equivariant and the normalized Haar measure on a compact group is unique, one sees that $p_n^*(\mu_{n+1}) = \mu_n$ for each $n\in\N$.  This in turn produces a $G_n$-equivariant isometric embedding $L^2(G_n)\rightarrow L^2(G_{n+1})$.  Under the right technical conditions, the projective family of probability measures $\{\mu_n\}_{n\in\N}$ induces a limit measure $\mu$ on $\overline{G} = \varinjlim G_k$ by Kolmogorov's theorem in such a way that $L^2(\overline{G}) = \varinjlim L^2(G_n)$, where the latter injective limit is taken in the category of Hilbert spaces.

  Finally, one notes that $G$ acts by translations on $L^2(\overline{G})$, producing a unitary representation which may then be decomposed. This representation is in a natural way a generalization for the regular representation defined in terms of the Haar measure for a locally compact group.  This program has been carried out for the infinite unitary group $\U(\infty) = \cup_{n\in\N} \U(n)$ (see \cite{BO}) and the infinite symmetric group $S_\infty = \cup_{n\in\N} S_n$ (see \cite{KOV}).    
  The disadvantage of this approach, however, is that it gives information about functions on $\overline{G}$, which is a very different space from $G$ (in particular, the set $G$ considered as a subset of $\overline{G}$ has measure $0$).
  
  An alternate path towards harmonic analysis on direct limits of compact groups is provided by the theory of invariant means and amenability.  Amenability theory is actually a very old subject within mathematics.  Unfortunately, many of the most interesting results are true only for locally compact groups.  Nevertheless, we will see that it is possible to define unitary ``regular representations'' for direct limits of compact groups which provide a certain decomposition theory for functions defined on the group $G$ itself (see also~\cite{Bel}).  Amenability theory has also been used to develop a generalization of the construction of induced representations (see \cite{OW,W}).
  
  This paper aims to collect in one place some of the most relevant results from amenability theory as applied to direct-limit groups.    We begin in Section 2 with a review of the basic functional-analytic and topological properties of means. Section 3 discusses some of the basic properties of amenable groups, including fixed-point theorems. In Section 4 we use amenability to explore the question of which Hilbert space representations of a group are unitarizable. In Section 5 we construct a generalization of the regular representation using invariant means and explore some properties of these representations.  Section 6 reviews the theory of almost-periodic functions.   Section 7 shows the resulting ``Plancherel Theorem'' for direct limits of locally compact abelian groups.  Finally, in Section 8 we discuss the application of invariant means to spherical analysis on direct limits of Gelfand pairs.  In particular, we show how several well-known results about such pairs can be motivated by invariant means.

For treatments of the classical theory of invariant means on locally compact groups, we refer the reader to \cite{ATP,JPP}.  Brief overviews of amenability theory may also be found in \cite{BHV,Bel,Fremlin}.  For more functional-analytic approaches to studying functions on a (not-necessarily locally compact) topological group, see \cite{Bel,Grundling, LL}. 



\section{Means on Topological Groups}
\label{meansSection} 

\noindent Consider the space $l^\infty(G)$ of all bounded functions $f:G\rightarrow\C$, with norm $||f|| = \sup_{g\in G} f(g)$ and involution given by $f^*(g) = \overline{f(g)}$ for all $g\in G$.  Then $l^\infty(G)$ is a commutative $C^*$-algebra.  In fact, $l^\infty(G)$ is a representation of $G$ under the $L$ given by $L_g f(h) = f(g^{-1}h)$ for $g\in G$ and $f\in l^\infty(G)$.  One can also consider the action $R$ given by $R_gf(h) = f(hg)$.  

Because we are interested in continuous representations of $G$, it is natural to consider the space $\RUC(G)$ of functions $f\in l^\infty(G)$ such that the map $G\mapsto l^\infty(G)$, $g\mapsto L_g(f)$ is continuous.  These functions are precisely the bounded uniformly continuous functions on $G$ for the right-uniformity (see \cite{Kelley} for more on uniform spaces).  Similarly, one defines $\LUC(G)$ to be the space of functions $f\in l^\infty(G)$ such that $G\mapsto l^\infty(G)$, $g\mapsto R_g(f)$ is continuous.  Finally, we define the space of bi-uniformly continuous functions to be $\UC(G)=\RUC(G)\cap \LUC(G)$.  One has that $L$ provides a continuous representation of $G$ on $\RUC(G)$, that $R$ is a continuous representation of $G$ on $\LUC(G)$. 

Now suppose that $\cA$ is a closed $C^*$-subalgebra of the space $l^\infty(G)$ of all bounded functions on $G$ which contains the constant functions.  A \textbf{mean} on $\cA$ is a continuous linear functional $\mu\in \cA^*$ such that 
\begin{enumerate}
\item $\mu(\mathbf{1}) = 1$
\item $\mu(f)\geq 0$ if $f\in \cA$ and $f\geq 0$.
\end{enumerate}
In other words, $\mu$ is a state for the $C^*$-algebra $\cA$.  It immediately follows that if $f\in \cA$ with $m\leq f(g) \leq M$ for all $g\in G$, then 
\[
m\leq \mu(f) \leq M.
\]


We write $\gM(\cA)$ for the space of all all means on $\cA$.  Note that $\gM(\cA)$ is contained in the closed unit ball of the dual space $\cA^*$ of all continuous linear functionals on $A$.  Furthermore, it is clear that $\gM(\cA)$ is a weak-$*$ closed, convex subset of $B_1(\cA^*)$.  From the Banach-Alauglu Theorem, it follows that $\gM(\cA)$ is weak-$*$ compact, convex subset of $\cA^*$.  We warn the reader, however, that unless $\cA$ is separable, it is not true in general that $\gM(\cA)$ is sequentially compact.  This subtlety has some important consequences, as we will later see.

For each $g\in G$, we may define a mean $\delta_g\in \gM(\cA)$ by $\delta_g(f) = f(g)$ for each $f\in \cA$. We refer to these means as \textbf{point evaluations}.  We will soon see that these point evaluations generate, in a certain sense all means on $\cA$.  We begin with a lemma about means on $l^\infty(G)$.

\begin{lemma}
\label{extremalPoints}
The means $\delta_g\in \gM(l^\infty(G))$ for $g\in G$ are precisely the extremal points of $\gM(l^\infty(G))$.
\end{lemma}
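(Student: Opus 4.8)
The plan is to establish the two inclusions — that each point evaluation is an extreme point, and that there are no others — essentially separately, leaning on the fact that $l^\infty(G)$ is a commutative unital $C^*$-algebra and that $\gM(l^\infty(G))$ is its state space. For the first inclusion, suppose $\delta_g = t\mu_1 + (1-t)\mu_2$ with $t\in(0,1)$ and $\mu_1,\mu_2\in\gM(l^\infty(G))$; I want to conclude $\mu_1=\mu_2=\delta_g$. Given $f\in l^\infty(G)$, set $h = f - f(g)\mathbf 1$, so that $h^*h\geq 0$ and $(h^*h)(g)=0$. Since $0 = \delta_g(h^*h) = t\,\mu_1(h^*h) + (1-t)\,\mu_2(h^*h)$ with both summands $\geq 0$, we get $\mu_i(h^*h) = 0$ for $i=1,2$. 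The Cauchy--Schwarz inequality for the positive semidefinite form $(a,b)\mapsto \mu_i(b^*a)$ then gives $|\mu_i(h)|^2 = |\mu_i(h\cdot\mathbf 1)|^2 \leq \mu_i(h^*h)\,\mu_i(\mathbf 1) = 0$, so $\mu_i(f) = f(g)\,\mu_i(\mathbf 1) = f(g) = \delta_g(f)$. Hence $\mu_1=\mu_2=\delta_g$, and $\delta_g$ is extremal.

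For the reverse inclusion I would invoke the standard structure theory. The extreme points of the state space of a $C^*$-algebra are exactly its pure states, and for the commutative algebra $l^\infty(G)$ these coincide, via the Gelfand transform $l^\infty(G)\cong C(\Omega)$ with $\Omega$ the compact Hausdorff space of characters, with the characters themselves. Equivalently, and more concretely: by the Riesz representation theorem a mean on $l^\infty(G)\cong C(\Omega)$ is a regular Borel probability measure on $\Omega$, and such a measure is extremal iff it is a point mass — if some Borel set $E$ has $0<\nu(E)<1$, then $\nu$ splits as $\nu(E)\,\nu_E + \nu(E^c)\,\nu_{E^c}$ into two distinct probability measures, while a point mass is visibly extremal. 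So every extreme point of $\gM(l^\infty(G))$ is evaluation at a point of $\Omega$, i.e., a character of $l^\infty(G)$.

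The main obstacle — and the step needing the most care — is the final identification of these characters with the point evaluations $\delta_g$, $g\in G$. Here I would use that a character $\chi$ sends each indicator $\mathbf 1_A$ to an idempotent in $\{0,1\}$, so that $\mathcal U_\chi = \{A\subseteq G : \chi(\mathbf 1_A)=1\}$ is an ultrafilter on $G$ and $\chi(f)=\lim_{\mathcal U_\chi} f$ for every $f\in l^\infty(G)$ (by uniform approximation of $f$ by simple functions); conversely every ultrafilter arises this way, and $\chi=\delta_g$ precisely when $\mathcal U_\chi$ is the principal ultrafilter at $g$. Thus the claim that \emph{every} extreme mean is a point evaluation is exactly the statement that every ultrafilter on $G$ is principal, which is the point at which one must be careful about the standing hypotheses on $G$ under which the lemma is intended. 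Granting it, the two inclusions together give that the extremal points of $\gM(l^\infty(G))$ are precisely the $\delta_g$ with $g\in G$.
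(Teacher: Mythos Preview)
Your first inclusion is clean and correct; the paper, incidentally, does not argue this direction at all. Your diagnosis of the second inclusion is also correct, and your hesitation is exactly on target: the step you flag --- that every character of $l^\infty(G)$ is a point evaluation, equivalently that every ultrafilter on $G$ is principal --- fails whenever $G$ is infinite. Each free ultrafilter on $G$ gives a character, hence a pure state, hence an extreme mean not of the form $\delta_g$. So the lemma as stated is simply false for infinite $G$, and no argument can close the gap you identify.

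The paper's own proof attempts the problematic direction via a ``support'' construction: it sets $Z_\mu = \bigcup\{A\subseteq G : \mu(\mathbf 1_A) = 0\}$ and $\supp\mu = G\setminus Z_\mu$, then asserts that $\mu(\mathbf 1_{\supp\mu}\, f) = \mu(f)$ for all $f$ and hence that $\supp\mu\neq\emptyset$. Both assertions fail for the mean coming from a free ultrafilter on an infinite $G$: every singleton is null, so $Z_\mu = G$ and $\supp\mu = \emptyset$, yet $\mu(\mathbf 1) = 1$. The underlying error is that an arbitrary union of $\mu$-null sets need not be $\mu$-null for a merely finitely additive $\mu$. Fortunately the only downstream use of the lemma is the weak-$*$ density of the convex hull of the $\delta_g$ in $\gM(l^\infty(G))$, and that follows directly from Hahn--Banach separation (a separating weak-$*$ continuous functional would be some $f\in l^\infty(G)$ with $\mu(f)>\sup_{g\in G} f(g)$, which is impossible for a mean), so the defect is local.
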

\begin{proof}
Suppose that $\mu\in\gM(l^\infty(G))$ is an extremal point.  Suppose that $A\subset G$, and define $\mu_A(f)=\mu(1_A f)$.  If $\mu_A(\textbf{1})\neq 0$, then we see that $\widetilde{\mu}_A = \frac{1}{\mu_A(\textbf{1})} \mu_A$ is a mean in $\gM(l^\infty(G))$. We define the zero set of $\mu$ to be the set
\[
Z_\mu = \bigcup_{A\subseteq G \text{ s.t.} \mu_A(\textbf{1}) = 0} A.
\]
We can then define the support of $\mu$ to be the set
\[
\supp \mu = G\backslash  Z_\mu.
\]
Note that $\mu_B(\textbf{1})\neq 0$ for all non-empty $B\subseteq \supp \mu$ and that $\mu(f) = \mu(1_{\supp\mu} f)$ for all $f\in \cA$.

It is clear that $\supp \mu \neq \emptyset$; in fact, if $\supp \mu = \emptyset$, then $\mu(\textbf{1}) =0$, which contradicts the assumption that $\mu(\textbf{1})=1$.  

Now suppose that $\supp \mu \neq \emptyset$ contains at least two elements.  Then we can write $\supp \mu = A \mathbin{\dot{\cup}} B$, where $A,B\neq \emptyset$.  It follows that
\begin{equation}
\label{extremalContradiction}
\mu = \mu_{A} + \mu_{B}=  \mu_A(\textbf{1}) \widetilde{\mu}_A + \mu_B(\textbf{1})\widetilde{\mu}_B
\end{equation}
Then $\widetilde{\mu}_A(1) + \widetilde{\mu}_B(1) = \mu(1_{\supp\mu}1) = 1$.  Furthermore, $\mu_A\neq \mu_B$ (for instance, $\mu_A(1_A) = 1$ but $\mu_B(1_A) = 0$).  Thus \ref{extremalContradiction} contradicts the assumption that $\mu$ is an extremal point.  It follows that $\supp\mu = \{g\}$ for some $g\in G$, and hence $\mu = \delta_g$.
\end{proof}

\begin{theorem}
\label{deltaDensity}
The convex hull of the point evaluations are weak-$*$ dense in  $\gM(\cA)$.
\end{theorem}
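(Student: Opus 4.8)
The plan is to use a Hahn–Banach separation argument. Let $K \subseteq \gM(\cA)$ denote the weak-$*$ closure of the convex hull of the point evaluations $\{\delta_g : g \in G\}$. Since $\gM(\cA)$ is weak-$*$ compact and convex, and the convex hull of the $\delta_g$ lies inside $\gM(\cA)$, the set $K$ is a nonempty weak-$*$ compact convex subset of $\cA^*$. Suppose, for contradiction, that $K \neq \gM(\cA)$, so that there is some $\nu \in \gM(\cA) \setminus K$. Since $K$ is weak-$*$ closed and convex and $\{\nu\}$ is weak-$*$ compact, the Hahn–Banach separation theorem applied in the locally convex space $(\cA^*, \text{weak-}*)$ produces a weak-$*$ continuous linear functional separating $\nu$ from $K$. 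The key point is that every weak-$*$ continuous linear functional on $\cA^*$ is given by evaluation at some element of $\cA$: there exist $f \in \cA$ (which we may take self-adjoint, since means take real values on self-adjoint elements) and $\alpha \in \R$ with
\[
\operatorname{Re}\du{\mu}{f} \leq \alpha < \operatorname{Re}\du{\nu}{f} \quad \text{for all } \mu \in K.
\]

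Now I would extract the contradiction by testing this inequality on the point evaluations themselves. For each $g \in G$ we have $\delta_g \in K$, so $\operatorname{Re}\, f(g) = \du{\delta_g}{f} \leq \alpha$ for all $g \in G$; that is, $\operatorname{Re} f \leq \alpha$ pointwise on $G$. But $\nu$ is a mean, so by the order property of means recorded just before Lemma~\ref{extremalPoints} (if $m \le h \le M$ pointwise then $m \le \nu(h) \le M$), applied to $h = \operatorname{Re} f$ — or directly: $\alpha\mathbf{1} - \operatorname{Re} f \ge 0$ in $\cA$, hence $\nu(\alpha\mathbf 1 - \operatorname{Re} f) \ge 0$ — we get $\operatorname{Re}\du{\nu}{f} = \nu(\operatorname{Re} f) \le \alpha$, contradicting $\operatorname{Re}\du{\nu}{f} > \alpha$. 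Therefore $K = \gM(\cA)$, which is the assertion.

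The only genuine subtlety is the identification of the dual of $(\cA^*, \text{weak-}*)$ with $\cA$ itself, and the reduction to a self-adjoint test function so that one can feed it into the positivity/order axiom of a mean; both are standard facts about weak-$*$ topologies and states on $C^*$-algebras, but they are what make the separation argument bite. I do not expect any real obstacle beyond being careful that the separating functional can be taken real-valued and associated to a self-adjoint element of $\cA$; everything else is a routine application of Hahn–Banach and the defining properties of means.
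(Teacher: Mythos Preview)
Your argument is correct, and it takes a genuinely different route from the paper. The paper first treats the special case $\cA = l^\infty(G)$ by appealing to Lemma~\ref{extremalPoints} (which identifies the extreme points of $\gM(l^\infty(G))$ as exactly the point evaluations) and then invoking the Krein--Milman theorem; it then reduces the general case to this one by using the M.~Riesz extension theorem to lift any mean on $\cA$ to a mean on $l^\infty(G)$. Your Hahn--Banach separation argument bypasses both steps: you never need to know what the extreme points are, and you never need to extend means to a larger algebra, since the contradiction is obtained directly from the order property of $\nu$ on $\cA$ itself. In that sense your proof is more self-contained and works uniformly for all $\cA$ at once. The paper's approach, on the other hand, yields as a byproduct the explicit description of the extreme points of $\gM(l^\infty(G))$, which is of some independent interest and is used implicitly later (e.g.\ in the proof of Lemma~\ref{characterDensity}). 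One small remark on your write-up: the parenthetical ``which we may take self-adjoint'' is slightly misplaced---what you actually use is that $\operatorname{Re}\langle\nu,f\rangle = \nu(\operatorname{Re} f)$ for any state $\nu$, and that $\operatorname{Re} f = \tfrac{1}{2}(f+f^*)$ lies in $\cA$ because $\cA$ is $*$-closed; you do not need to replace $f$ itself by a self-adjoint element in the separation step.
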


\begin{proof}
Because $\gM(l^\infty(G))$ is a compact, convex subset of $l^\infty(G)^*$ under the weak-$*$ topology, the result follows for the case $\cA=l^\infty(G)$ by Lemma~\ref{extremalPoints} and the Krein-Milman theorem.

For each $f\in l^\infty(G)$, we see that $f = (f+||f||_\infty\textbf{1}) - ||f||_\infty\textbf{1}$, where $f+||f||_\infty\textbf{1} \geq 0$ and $||f||_\infty\textbf{1}\in \cA$.  Thus, the hypotheses of the M.\ Riesz Extension Theorem are satisfied, and it follows that every positive functional on $\cA$ may be extended to a positive functional on $l^\infty(G)$.  In particular, every mean on $\cA$ may be extended to a mean on $l^\infty(G)$.  Because the convex hull of point evaluations is weak-$*$ dense in $l^\infty(G)$, the theorem follows.
\end{proof}

We warn the reader that despite Theorem~\ref{deltaDensity}, one can not in general say that an arbitrary mean in $\gM(A)$ is a weak-$*$ limit of convex combinations of point evaluations unless $\gM(\cA)$ is first-countable.

Means may be thought of as a generalization of the notion of probability measures on $G$.  In fact, it possible to view means on $\cA$ as legitimate probability measures on a certain compactification of $G$. 

 Denote by $\widehat{A}$ the space of all characters on $G$ under the weak-$*$ topology on $\cA^*$. Then $\widehat{A}$ is a compact Hausdorff space.  Recall that the Gelfand Transform provides an isomorphism $\verb!^!:\cA\rightarrow C(\widehat{A})$, given by $\widehat{f}(\lambda) = \lambda(f)$ for each $f\in\cA$ and $\lambda\in\widehat{A}$.  It follows that there is a linear isomorphism between the states (that is, means)  of $\cA$ and the states of $C(\widehat{A})$.  But the states of $C(\widehat{A})$ are precisely the Radon measures on $\widehat{A}$ by the Riesz Representation Theorem. 
 
  Thus, the means on $\cA$ correspond bijectively to \textit{measures} on $\widehat{\cA}$.  For a given mean $\mu$ on $\cA$, we will denote the corresponding measure on $\widehat{\cA}$ by $\widehat{\mu}$.  Due to the fact that
  \[
  \mu(f) = \int_{\widehat{\cA}} \widehat{f}(x) d\widehat{\mu}(x),
 \]
 we will occasionally use the notation
  \[
  \mu(f) \equiv \int_G f(x) d\mu(x)
  \]
for $\mu\in\gM(\cA)$ and $f\in\cA$. This notation is slightly misleading because $\mu$ is not, in fact, a measure on $G$.   Nevertheless, we note that $\mu$ does share some properties with integrals; namely, it is linear and satisfies the inequality $|\mu(f)| \leq \mu(|f|)\leq ||f||_\infty$.  
 
Each point-evaluation functional $\delta_g: f\mapsto f(g)$ for $g\in G$ defines a mean in $\gM(\cA)$.  We write $i_\cA:G\rightarrow \gM(\cA)$ for the map $g\mapsto \delta_g$ and denote its image by $G_\cA=\{\delta_g| g\in G\}$.   
 
\begin{lemma}
\label{characterDensity}
The set $G_\cA$ is dense in $\widehat{\cA}$.
\end{lemma}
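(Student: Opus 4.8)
The plan is to deduce this from the Gelfand picture developed just above, using the standard fact that a subset of a compact Hausdorff space is dense precisely when the only continuous function vanishing on it is $0$. I would argue by contradiction.

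Recall that $\widehat{\cA}$ is compact Hausdorff and that the Gelfand transform is an isometric $*$-isomorphism $\cA\to C(\widehat{\cA})$, $f\mapsto\widehat{f}$, with $\widehat{f}(\lambda)=\lambda(f)$ for every character $\lambda$; in particular $\widehat{f}(\delta_g)=\delta_g(f)=f(g)$ for each $g\in G$. Suppose $G_\cA$ is \emph{not} dense in $\widehat{\cA}$, and let $K=\overline{G_\cA}$, a proper closed subset of $\widehat{\cA}$. Choose $\lambda_0\in\widehat{\cA}\setminus K$. By Urysohn's lemma there is $F\in C(\widehat{\cA})$ with $F|_K\equiv 0$ and $F(\lambda_0)=1$; in particular $F\neq 0$. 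Write $F=\widehat{f}$ for the unique corresponding $f\in\cA$.

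Since every point evaluation $\delta_g$ lies in $G_\cA\subseteq K$, we get $f(g)=\widehat{f}(\delta_g)=F(\delta_g)=0$ for all $g\in G$. But $\cA$ is a subalgebra of $l^\infty(G)$, so an element of $\cA$ vanishing at every point of $G$ is the zero element of $\cA$; thus $f=0$ and hence $F=\widehat{f}=0$, contradicting $F(\lambda_0)=1$. Therefore $G_\cA$ is dense in $\widehat{\cA}$. There is essentially no obstacle in this argument; the only point to keep straight is that injectivity of the inclusion $\cA\hookrightarrow l^\infty(G)$ is exactly what converts ``$F$ vanishes on $G_\cA$'' into ``$f=0$''. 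One could alternatively invoke the Stone--Weierstrass theorem, observing that $\widehat{\cA}=C(\widehat{\cA})$ separates points, but the Urysohn argument is the most direct.
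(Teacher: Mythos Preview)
Your proof is correct and in fact more direct than the paper's. Both arguments end with Urysohn's lemma on $\widehat{\cA}$, but the paper arrives there by a detour: it first invokes Theorem~\ref{deltaDensity} (density of the convex hull of the $\delta_g$ in $\gM(\cA)$, proved via Lemma~\ref{extremalPoints}, Krein--Milman, and the M.~Riesz extension theorem), transfers this under the Gelfand transform to the statement that the convex hull of the point measures $\delta_{\widehat{g}}$ is weak-$*$ dense in the probability measures on $\widehat{\cA}$, and then derives a contradiction from a Urysohn function. You bypass all of this by using only that $\cA\subseteq l^\infty(G)$, so that any $f\in\cA$ whose Gelfand transform vanishes on $G_\cA$ already vanishes identically as a function on $G$. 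This is the standard argument and requires none of the earlier machinery; the paper's route has the minor expository virtue of tying the lemma back to the mean-theoretic picture, but yours is cleaner and self-contained.
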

\begin{proof}
We denote the point evaluation $\delta_g$ by $\widehat{g}$ when we wish to consider it as an element of $\widehat{A}$.  Note that, when considered as a mean in $\gM(\cA)$, the point evaluation $\delta_g$ corresponds under the Gelfand transform to the point measure $\delta_{\widehat{g}}$ on $\widehat{A}$; in fact, if $f\in \cA$, then 
$\delta_g(f)=f(g) =  \widehat{f}(\widehat{g})$.  It follows from Theorem~\ref{deltaDensity} that the convex hull of the point measures $\delta_{\widehat{g}}$ is dense in the space of probability measures on $\widehat{A}$.  In particular, every point measure $\delta_x$ on $\widehat{A}$, where $x\in\widehat{A}$, is in the closed convex hull of the point measures $\delta_{\widehat{g}}$.  

Suppose that $x\in\widehat{A}$ is not in the closure of $G_\cA$.  Then Urysohn's Lemma implies the existence of a continuous function $h\in C(\widehat{A})$ such that $h=0$ on the restriction to the closure of $G_\cA$ but $h(x)=1$.  Hence $\delta_x(h) = 1$ but $\mu(h) = 0$ for every measure $\mu$ in the closed convex hull of the point measures $\delta_{\widehat{g}}$, which contradicts the weak-$*$ density noted above.    
\end{proof}

 It is clear that $i_\cA$ is injective if and only if $\cA$ separates points on $G$.  We will later see some examples for which $i_\cA$ is far from being injective.
 
 We end this section with three topological results about the compactification $i_\cA:G\rightarrow \widehat{\cA}$.

\begin{lemma}
If $\cA\subseteq C(G)$, then $i_\cA$ is continuous. 
\end{lemma}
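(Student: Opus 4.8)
The plan is simply to unwind the definition of the topology on $\widehat{\cA}$. By construction, $\widehat{\cA}$ carries the subspace topology inherited from the weak-$*$ topology on $\cA^*$, and the weak-$*$ topology is the initial (coarsest) topology making every evaluation map $\mathrm{ev}_f\colon \cA^*\to\C$, $\lambda\mapsto\lambda(f)$, continuous, as $f$ ranges over $\cA$. A standard fact about initial topologies is that a map $i\colon X\to\cA^*$ (with $X$ any topological space, here $X=G$) is continuous if and only if $\mathrm{ev}_f\circ i$ is continuous for every $f\in\cA$. So the whole lemma reduces to checking continuity of these composites.

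For $i_\cA\colon g\mapsto\delta_g$ this composite is $g\mapsto \mathrm{ev}_f(\delta_g)=\delta_g(f)=f(g)$; that is, $\mathrm{ev}_f\circ i_\cA = f$ as a function on $G$. Hence $i_\cA$ is continuous precisely when every $f\in\cA$ is continuous on $G$, i.e.\ precisely when $\cA\subseteq C(G)$, which is exactly the hypothesis. (Equivalently, one may argue with nets: if $g_\alpha\to g$ in $G$ then $\delta_{g_\alpha}(f)=f(g_\alpha)\to f(g)=\delta_g(f)$ for each $f\in\cA$ by continuity of $f$, so $\delta_{g_\alpha}\to\delta_g$ in the weak-$*$ topology.) Note that since $\widehat{\cA}$ is weak-$*$ closed in $\cA^*$ and each $\delta_g$ lies in $\widehat{\cA}$, continuity as a map into $\cA^*$ is the same as continuity as a map into $\widehat{\cA}$.

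I do not expect any genuine obstacle here: the statement is essentially a tautology once the weak-$*$ topology is recognized as an initial topology, and the only point requiring a moment's care is to invoke the universal property of that topology (or the net characterization of weak-$*$ convergence) rather than attempting a more hands-on estimate. In fact the argument gives the converse as well, so $i_\cA$ is continuous \emph{if and only if} $\cA\subseteq C(G)$; since the lemma only asserts one direction, a two-sentence proof along the lines above suffices.
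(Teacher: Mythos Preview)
Your proof is correct and is essentially the same argument as the paper's: both simply unwind the definition of the weak-$*$ topology to see that continuity of $i_\cA$ amounts to continuity of each $f\in\cA$. The only difference is packaging---the paper checks directly that preimages of subbasic weak-$*$ neighborhoods are open, while you invoke the universal property of the initial topology (or, equivalently, the net characterization); these are the same idea at two levels of abstraction, and your observation that the converse also falls out is a nice bonus.
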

\begin{proof}
A basis for the weak-$*$ topology on $\widehat{\cA}$ is given by the  neighborhoods 
\[
B_{f_1,\ldots f_k}^\epsilon(\delta_h) = \left\{\left.x\in \widehat{\cA} \right| |x(f_i)-\delta_h(f_i))|<\epsilon \text{ for all } 1\leq i\leq k \right\},
\]
where $f_i,\ldots f_k \in \cA$, $h\in G$, and $\epsilon > 0$, provide a basis for the weak-$*$ topology on $\cA$ (here we use that $G_{\cA}$ is dense in $\widehat{\cA}$).  Pick $g\in i_\cA^{-1}(B_{f_1,\ldots f_k}^\epsilon(\delta_h))$.  Then $|f_i(g)-f_i(h)|<\epsilon$ for $1\leq i\leq k$.  Since $f_1,\ldots f_k\in C(G)$, it follows that there is an open neighborhood $V$ of $g$ such that $|f_i(a)-f_i(h)|<\epsilon$ for all $a\in V$.  It is clear that $V\subseteq i_\cA^{-1}(B_{f_1,\ldots f_k}^\epsilon(h))$ and we are done.
\end{proof}

\begin{theorem}
Suppose that $\cA$ separates closed subsets of $G$ (for instance, if $\cA=l^\infty(G)$ or if $G$ has a normal topology and $C(G)\subseteq \cA$).  If $U$ and $V$ are disjoint closed subsets of $G$, then $i_\cA(U)$ and $i_\cA(V)$ have disjoint closures in~$\widehat{\cA}$.
\end{theorem}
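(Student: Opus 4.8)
The plan is to use a single separating function together with the continuity of its Gelfand transform. First I would unwind the hypothesis: saying that $\cA$ separates the disjoint closed sets $U$ and $V$ means there is an $f\in\cA$ whose restrictions $f|_U$ and $f|_V$ have disjoint ranges; after composing with a suitable affine map (and, if necessary, passing to the real part, which still lies in the $C^*$-algebra $\cA$) we may as well assume $f$ is real-valued with $f\equiv 0$ on $U$ and $f\equiv 1$ on $V$. In the two model cases this $f$ is concrete: for $\cA=l^\infty(G)$ take $f=1_V$, and for a normal space $G$ with $C(G)\subseteq\cA$ take the function supplied by Urysohn's Lemma.

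Next I would pass to the Gelfand transform $\widehat{f}\in C(\widehat{\cA})$, recalling from the discussion above that $\widehat{f}(x)=x(f)$ and that $\widehat{\cA}$ is compact Hausdorff. For $g\in U$ we have $\widehat{f}(\delta_g)=\delta_g(f)=f(g)=0$, so $\widehat{f}$ vanishes identically on $i_\cA(U)$; since $\widehat{f}$ is continuous and $\{x\in\widehat{\cA}:\widehat{f}(x)=0\}$ is closed, $\widehat{f}$ also vanishes on $\overline{i_\cA(U)}$. The same argument with $V$ in place of $U$ shows $\widehat{f}\equiv 1$ on $\overline{i_\cA(V)}$.

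Finally, the closed sets $\widehat{f}^{-1}(\{0\})$ and $\widehat{f}^{-1}(\{1\})$ are disjoint (since $0\neq 1$) and contain $\overline{i_\cA(U)}$ and $\overline{i_\cA(V)}$ respectively, the closures being taken in $\widehat{\cA}$; hence these two closures are disjoint, which is the claim. There is essentially no hard step here: the only point requiring a moment's care is the reduction in the first paragraph, i.e.\ extracting from ``$\cA$ separates closed sets'' a genuine function that is constant on each of $U$ and $V$ --- or, at worst, noting that if $f|_U$ and $f|_V$ merely have disjoint closed ranges $K_0,K_1$, then the disjoint closed sets $\widehat{f}^{-1}(K_0)$ and $\widehat{f}^{-1}(K_1)$ already separate the two images. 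Everything else is just the continuity of the Gelfand transform together with the identity $\widehat{f}(\delta_g)=f(g)$.
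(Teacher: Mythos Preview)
Your argument is correct and is essentially the same as the paper's: both pick $f\in\cA$ with $f|_U=0$, $f|_V=1$, observe $\delta_g(f)=f(g)$, and use continuity of evaluation at $f$ (equivalently, of $\widehat{f}$) to conclude that the values $0$ and $1$ persist on the respective closures. The only difference is cosmetic: you phrase things via the Gelfand transform $\widehat{f}$ and spend a moment justifying the normalization of $f$, whereas the paper simply takes the existence of such an $f$ as the meaning of ``$\cA$ separates closed subsets'' and writes $x(f)$ directly.
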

\begin{proof}
Suppose that $U$ and $V$ are disjoint closed subsets of $G$ and that $f:G\rightarrow [0,1]$ is an element of $\cA$ such that $f|_U=0$ and $f|_V = 1$.  Thus, for any $g\in U$ and $h\in V$, one has that $\delta_g(f) = 0$ and $\delta_h(f)=1$.  It immediately follows that $x(f) = 0$ for each $x\in \overline{i_\cA(U)}$ and that $y(f) = 1$ for any $y \in \overline{i_\cA(V)}$.  The result then follows.  
\end{proof}

\begin{theorem}
\label{homeomorphismTheorem}
Suppose that for each closed set $F\subseteq G$ and each point $g\in G\backslash F$ there is a function $f\in\cA$ such that $f|_F=0$ but $f(g)=1$.  Then $i_\cA$ is a homeomorphism onto its image.
\end{theorem}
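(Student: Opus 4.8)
The plan is to reduce the assertion to three verifications: that $i_\cA$ is continuous, that it is injective, and that it maps closed subsets of $G$ onto relatively closed subsets of its image $G_\cA = i_\cA(G)$. A continuous injective map that sends closed sets to closed subsets of its image is a homeomorphism onto that image, so these suffice. Continuity is precisely the preceding lemma, provided one assumes $\cA\subseteq C(G)$ — an assumption implicit in the statement and in fact equivalent to continuity of $i_\cA$ — so I would simply cite it. For injectivity, given distinct $g,h\in G$ I would apply the hypothesis to the closed singleton $F=\{h\}$, using that $G$, being a Hausdorff topological group, has closed points; this yields $f\in\cA$ with $f|_{\{h\}}=0$ and $f(g)=1$, whence $\delta_g(f)=1\neq 0=\delta_h(f)$ and so $\delta_g\neq\delta_h$.

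The substance is the closed-map step. Fix a closed set $F\subseteq G$; I want to show that $\overline{i_\cA(F)}\cap G_\cA = i_\cA(F)$, the closure being taken in $\widehat{\cA}$, since this says exactly that $i_\cA(F)$ is closed in the subspace $G_\cA$. The inclusion $\supseteq$ is trivial, so suppose $\delta_g\in\overline{i_\cA(F)}$ for some $g\in G$ and assume, toward a contradiction, that $g\notin F$. By hypothesis there is $f\in\cA$ with $f|_F=0$ and $f(g)=1$. Its Gelfand transform $\widehat{f}\in C(\widehat{\cA})$ is continuous, and $\widehat{f}(\delta_h)=\delta_h(f)=f(h)=0$ for every $h\in F$; thus $\widehat{f}$ vanishes on $i_\cA(F)$, hence on $\overline{i_\cA(F)}$ by continuity. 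But $\widehat{f}(\delta_g)=f(g)=1$, a contradiction. Therefore $g\in F$, and so $\delta_g=i_\cA(g)\in i_\cA(F)$ by injectivity, which proves that $i_\cA(F)$ is closed in $G_\cA$.

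Putting the pieces together, $i_\cA\colon G\to G_\cA$ is a continuous bijection carrying closed sets to closed sets, hence a homeomorphism onto its image. I do not expect a real obstacle: the only step with any content is transporting a function $f$ that separates $g$ from $F$ on $G$, via the Gelfand transform, to a continuous function $\widehat f$ on the compactification $\widehat{\cA}$ that separates $\delta_g$ from $\overline{i_\cA(F)}$ — which is precisely what the separation hypothesis is designed to furnish. The points to keep in mind are that continuity of $i_\cA$ — and hence the whole statement — needs $\cA\subseteq C(G)$, and that Hausdorffness of $G$ is used to know singletons are closed.
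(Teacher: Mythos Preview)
Your proof is correct and follows essentially the same route as the paper's: both show that for closed $F\subseteq G$ and $g\notin F$ the separating function forces $\delta_g\notin\overline{i_\cA(F)}$, so $i_\cA$ is a closed map onto its image, and combine this with continuity and injectivity. Your version is in fact more scrupulous than the paper's, which simply asserts that $i_\cA$ is a continuous injection; you correctly flag that continuity requires $\cA\subseteq C(G)$ (so that the preceding lemma applies) and that injectivity uses Hausdorffness to ensure singletons are closed.
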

\begin{proof}
Pick a closed set $F\subseteq G$, a point $g\notin F$ and a function $f\in\cA$ such that $f|_F=0$ but $f(g)=1$. Then $\delta_g(f) = 1$ and $x(f) = 0$ for all $x$ in the closure $\overline{i_\cA(F)}$.  Then $\delta_g\notin  \overline{i_\cA(F)}$ for all $g\in G\backslash F$.  In particular, $i_\cA(F)$ is closed in the relative topology of $G_\cA$.  Thus, $i_\cA$ is a closed map onto its image.  Since $\cA$ separates points on $G$, we have that $i_\cA$ is a continuous injection.  The result follows.
\end{proof}

\begin{corollary}
The map $i_{\RUC(G)}: G\rightarrow \widehat{\RUC(G)}$ is a homeomorphism onto its image.
\end{corollary}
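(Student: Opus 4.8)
The strategy is to verify the hypothesis of Theorem~\ref{homeomorphismTheorem} for $\cA=\RUC(G)$: namely, that for every closed set $F\subseteq G$ and every point $g_0\in G\setminus F$ there is a function $f\in\RUC(G)$ with $f|_F=0$ and $f(g_0)=1$. Granting this, the corollary is immediate. Thus the entire task reduces to producing enough right-uniformly continuous functions to separate points from closed sets, and the classical tool for this is the Birkhoff--Kakutani construction of an invariant pseudometric associated to a neighborhood basis at the identity $e$.

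First I would use continuity of the map $v\mapsto vg_0$ (a homeomorphism of $G$ sending $e$ to $g_0$) to find a symmetric neighborhood $V$ of $e$ with $Vg_0\cap F=\emptyset$. Next, starting from a chain $V\supseteq V_1\supseteq V_2\supseteq\cdots$ of symmetric identity-neighborhoods with $V_{n+1}^3\subseteq V_n$, the Birkhoff--Kakutani lemma yields a continuous \emph{right-invariant} pseudometric $d$ on $G$ (so $d(xz,yz)=d(x,y)$ for all $x,y,z\in G$) whose open unit ball $\{x:d(x,e)<1\}$ is contained in $V$. Define $f\colon G\to[0,1]$ by $f(x)=\max\bigl(0,\,1-d(x,g_0)\bigr)$. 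Then $f(g_0)=1$; and if $x\in F$ then $xg_0^{-1}\notin V$ (otherwise $x\in Vg_0$), so by right-invariance $d(x,g_0)=d(xg_0^{-1},e)\geq 1$, whence $f(x)=0$. It then remains only to check that $f\in\RUC(G)$.

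For this, note that $f$ is bounded and that $|f(x)-f(y)|\leq d(x,y)$. Writing $(L_gf)(h)=f(g^{-1}h)$ and substituting $h=gk$, one has $\|L_gf-f\|_\infty=\sup_{k\in G}|f(k)-f(gk)|\leq\sup_{k\in G} d(k,gk)$, and by right-invariance $d(k,gk)=d(e,g)$ for every $k$; hence $\|L_gf-f\|_\infty\leq d(e,g)\to 0$ as $g\to e$. Since each $L_g$ is an isometry of $l^\infty(G)$, it follows that $g\mapsto L_gf$ is norm-continuous on all of $G$, i.e.\ $f\in\RUC(G)$. Applying Theorem~\ref{homeomorphismTheorem} then finishes the proof. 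The only step that is not routine bookkeeping is the Birkhoff--Kakutani metrization lemma — specifically, obtaining a continuous right-invariant pseudometric whose unit ball is trapped inside a prescribed neighborhood of $e$ — and that is where I expect any genuine work to lie.
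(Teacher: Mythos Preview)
Your proof is correct and follows exactly the approach the paper sketches: both arguments verify the point--closed-set separation hypothesis of Theorem~\ref{homeomorphismTheorem} by exploiting the fact that the topology of a topological group is generated by a family of (right-)invariant pseudometrics, and the paper's one-line appeal to ``a family of semimetrics'' is precisely what your Birkhoff--Kakutani construction makes explicit. Your version simply supplies the details the paper leaves to the reader.
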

\begin{proof}
Because $G$ is a topological group, it is also a uniform space using the left action of the group on itself.  Its topology is thus given by a family of semimetrics.  It follows that there are sufficient right-uniformly continuous functions on $G$ to separate closed sets from points.  The result then follows from Theorem~\ref{homeomorphismTheorem}.
\end{proof}

\section{Amenable Groups}
\noindent For the rest of this article, we assume that  $\cA\subseteq \RUC(G)$.  One can then define a continuous left-action of $G$ on $\gM(\cA)$ by setting $g\cdot \mu(f) = \mu(L_{g^{-1}} f)$. We say that a mean $\mu\in\gM(\cA)$ is an \textbf{invariant mean on $\cA$} if $g\cdot \mu = \mu$ for all $g\in G$ (that is, if $\mu(L_g f) = \mu(f)$ for all $g\in G$).  One says that $G$ is an \textbf{amenable group} if there is a nontrivial invariant mean in $\gM(\RUC(G))$.

It is not difficult to show that every character in  $\widehat{\cA}$ is a positive functional on $\cA$ and that, in fact, $\widehat{\cA}$ is a closed subset of $\gM(\cA)$ in the weak-$*$ topology. Furthermore, the $G$-action on $\gM(\cA)$ restricts to a continuous left-action of $G$ on $\widehat{\cA}$, and so it is possible to ask whether there are any $G$-invariant characters in $\widehat{\cA}$.  
Recalling from Section~\ref{meansSection} the correspondence between Radon measures on $\widehat{\cA}$ and means on $\cA$, we see that there is a one-to-one correspondence between $G$-invariant means in $\gM(\cA)$ and $G$-invariant measures on $\widehat{\cA}$.


If there is a nontrivial invariant mean in $\gM(\RUC(G))$, then we say that $G$ is \textbf{amenable}.  The term ``amenable'' is due to M. M. Day, who discovered a very powerful alternate characterization of amenablility in terms of the affine actions of a group on compact convex sets.  (An \textbf{affine action} of a group $G$ on a convex subset of $K$ of a vector space $V$ is an action $G\times K\rightarrow K$, $(g,v)\mapsto g\cdot v$ such that $g\cdot (tv + (1-t) w) = t(g\cdot v) + (1-t)(g\cdot w)$ for all $v,w\in K$, $g\in G$, and $t\in [0,1]$.)

\begin{theorem}[Day's Fixed Point Theorem \cite{Day2}]
Let $G$ be a topological group.  The following are equivalent:
\begin{enumerate}
\item $G$ is amenable.
\item Every compact Hausdorff space on which $G$ acts continuously admits a $G$-invariant Radon probability measure.
\item Every continuous affine action of $G$ on a compact convex subset  $K$ of a locally convex vector $V$ space has a fixed point.
\end{enumerate}
\end{theorem}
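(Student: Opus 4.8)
I would prove this as the cycle of implications $(3)\Rightarrow(1)\Rightarrow(2)\Rightarrow(3)$.

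The implication $(3)\Rightarrow(1)$ is essentially a restatement. Apply the hypothesis to the set $K=\gM(\RUC(G))$, which by the discussion in Section~\ref{meansSection} is a weak-$*$ compact convex subset of the locally convex space $\RUC(G)^*$, equipped with the continuous affine action $g\cdot\mu(f)=\mu(L_{g^{-1}}f)$ introduced at the start of this section. (Affineness is immediate from linearity of each $\mu$, and continuity was already noted.) A fixed point of this action is precisely a mean $\mu$ with $\mu(L_gf)=\mu(f)$ for all $g\in G$, i.e.\ an invariant mean on $\RUC(G)$, so $G$ is amenable.

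For $(1)\Rightarrow(2)$, let $X$ be a compact Hausdorff space carrying a continuous $G$-action, fix a basepoint $x_0\in X$, and let $\pi\colon G\to X$, $\pi(g)=g\cdot x_0$, be the orbit map. The key point is that $\phi\circ\pi\in\RUC(G)$ for every $\phi\in C(X)$: given $\epsilon>0$, joint continuity of the action at $(e,x)$ together with continuity of $\phi$ yields, for each $x\in X$, a neighbourhood $V_x$ of $e$ and a neighbourhood $W_x$ of $x$ such that $|\phi(v\cdot y)-\phi(y)|<\epsilon$ whenever $v\in V_x$ and $y\in W_x$; covering $X$ by finitely many $W_{x_1},\dots,W_{x_n}$ and putting $V=\bigcap_i V_{x_i}$ produces a single neighbourhood $V$ of $e$ with $|\phi(v\cdot y)-\phi(y)|<\epsilon$ for all $v\in V$ and all $y\in X$, and specializing $y=\pi(g)$ shows that $\phi\circ\pi$ is right-uniformly continuous. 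Now let $m\in\gM(\RUC(G))$ be an invariant mean; then $\nu(\phi):=m(\phi\circ\pi)$ is a positive, unital linear functional on $C(X)$, hence by the Riesz Representation Theorem a Radon probability measure on $X$. Finally, for the $G$-action $(g\cdot\phi)(x)=\phi(g^{-1}\cdot x)$ on $C(X)$ one checks the identity $(g\cdot\phi)\circ\pi=L_g(\phi\circ\pi)$, whence $\nu(g\cdot\phi)=m(L_g(\phi\circ\pi))=m(\phi\circ\pi)=\nu(\phi)$ by invariance of $m$; that is, $\nu$ is $G$-invariant.

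For $(2)\Rightarrow(3)$, let $G$ act continuously and affinely on a compact convex subset $K$ of a locally convex space $V$. Viewing $K$ merely as a compact Hausdorff $G$-space, $(2)$ supplies a $G$-invariant Radon probability measure $\nu$ on $K$. Let $b(\nu)\in K$ be its barycenter, which exists and lies in $K$ since $K$ is compact and convex in a locally convex space. Because for each $g\in G$ the map $x\mapsto g\cdot x$ is continuous and affine on $K$, it commutes with the formation of barycenters, so $g\cdot b(\nu)=b(g_*\nu)=b(\nu)$ using the $G$-invariance of $\nu$; thus $b(\nu)$ is the fixed point required by $(3)$. I expect the main obstacle to be the verification in $(1)\Rightarrow(2)$ that $\phi\circ\pi$ actually lands in $\RUC(G)$ — upgrading the mere joint continuity of the action to the uniform estimate that right-uniform continuity demands — which is exactly where compactness of $X$ is indispensable; a secondary point requiring care is the existence of the barycenter in $K$ and its commutation with continuous affine maps in $(2)\Rightarrow(3)$, which are standard facts about barycenters on compact convex sets but should be cited rather than glossed over.
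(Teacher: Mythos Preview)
Your proposal is correct and follows essentially the same route as the paper: the same three implications are established by the same mechanisms---the orbit map and an invariant mean to push forward to a Radon measure in $(1)\Rightarrow(2)$, the barycenter of the resulting invariant measure in $(2)\Rightarrow(3)$, and the action on $\gM(\RUC(G))$ in $(3)\Rightarrow(1)$. You in fact supply more detail than the paper at the one point it leaves to the reader, namely the compactness argument that $\phi\circ\pi\in\RUC(G)$; the only minor difference is that in $(2)\Rightarrow(3)$ you invoke directly that continuous affine maps commute with barycenters, whereas the paper verifies this by checking it on finite convex combinations of point masses and passing to the weak-$*$ closure.
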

\begin{proof}
We closely follow the proof in \cite[Theorem G.1.7]{BHV}.
We begin with $(1)\implies (2)$.  Let $K$ be a compact Hausdorff space with a continuous $G$-action, and let $\mu\in \gM(\RUC(G))$ be a $G$-invariant mean.  Fix a point $v\in K$ and consider the continuous map $p:G\rightarrow K$ by $p(g)= g\cdot v$. We define a measure $\mu_K$ on $K$ by setting
\[
\mu_K(\phi) = \mu(\phi\circ p)
\]
for each $\phi\in C(X)$.  One shows that $\phi\circ p\in \RUC(G)$.  It is clear that $\mu_K$ defines a continuous positive functional on $C(X)$ with total mass $||\mu_K||=\mu_K(\textbf{1})=1$.  Thus $\mu_K$ defines a $G$-invariant probability measure on $K$ by the Riesz representation theorem.  

To prove $(2)\implies (3)$, let $K$ be a compact convex subset of a locally convex vector space $V$, and suppose that $G$ acts affinely on $K$.  In particular, by $(2)$ there is a $G$-invariant probability measure $\mu_K$.

Now let $\mu$ be a Radon measure on $K$.  By \cite{Rudin}, there is $b_\mu\in K$ such that 
\[
\langle b_\mu,\lambda \rangle = \int_K \langle v,\lambda \rangle d\mu_K(v)
\]
for each $\lambda\in V^*$.  One refers to $b_\mu$ as the \textbf{barycenter} of $\mu$.

Note that the space of all Radon measures on $K$ is identical to the space of all means on $C(K)$.  Suppose that $\mu = c_1 \delta_{v_1}+ \cdots +c_k\delta_{v_k}$, where $c_1,\ldots c_k\geq 0$ with $c_1+\cdots +c_k = 1$, and where $v_1,\ldots, v_k\in K$.  $ \sum_{i=1}^k c_i v_i$.  If $g\in G$, then $g\cdot \mu = c_1 \delta_{g\cdot v_1} + \cdots c_k\delta_{g\cdot v_k}$.  Thus $b_{g\cdot \mu} = g\cdot b_\mu$.  Since the convex hull of point measures on $K$ is weak-$*$ dense in the space of all measures on $K$, it follows that $b_{g\cdot\mu} = g\cdot b_\mu$ for all $g\in G$ and all Radon measures $\mu$ on $K$.

In particular, for the $G$-invariant measure $\mu_K$ on $K$, we see that $g\cdot b_{\mu_K} = b_{g\cdot \mu_K} = b_{\mu_K}$.  Hence $b_{\mu_K}$ is a $G$-fixed point in $K$.

Finally, to see that $(3)\implies (1)$, we need only note that $G$ acts continuously on the compact convex subset $\gM(\RUC(G))$ of the vector space $\RUC(G)^*$.  Thus, $(3)$ immediately implies the existence of a $G$-invariant mean in $\gM(\RUC(G)$.
\end{proof}

We immediately arrive at the following corollary:
\begin{corollary}
Suppose that $G$ is an amenable group.  Then there is a right-$G$-invariant mean on $\LUC(G)$.  Furthermore, there is a mean in $\gM(\UC(G))$ which is both right- and left-$G$-invariant and invariant under the transformation $f\mapsto f^\vee$ given by $f^\vee(x) = f(x^{-1})$.
\end{corollary}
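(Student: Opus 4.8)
The plan is to build everything out of the inversion map $f \mapsto f^\vee$, $f^\vee(x) = f(x^{-1})$, which is an isometric involution of $l^\infty(G)$. First I would record its interaction with translations: a one-line computation gives $(L_g f)^\vee = R_g(f^\vee)$ and $(R_g f)^\vee = L_g(f^\vee)$ for all $g\in G$. Since $\|(L_g f)^\vee - f^\vee\| = \|L_g f - f\|$, the orbit map $g\mapsto R_g(f^\vee)$ is continuous exactly when $g\mapsto L_g f$ is, so $f\mapsto f^\vee$ carries $\RUC(G)$ onto $\LUC(G)$ and back, and in particular carries $\UC(G)$ onto itself.

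The first assertion is then immediate. Since $G$ is amenable there is a left-$G$-invariant mean $m_L\in\gM(\RUC(G))$; define $m_R(f) := m_L(f^\vee)$ for $f\in\LUC(G)$. This is well defined because $f^\vee\in\RUC(G)$, it is plainly a mean, and $m_R(R_g f) = m_L((R_g f)^\vee) = m_L(L_g(f^\vee)) = m_L(f^\vee) = m_R(f)$, so $m_R$ is a right-$G$-invariant mean on $\LUC(G)$.

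For the two-sided invariant mean I would run a double-averaging argument with these two means. Given $f\in\UC(G)$, form the function $g$ on $G$ by $g(y) := m_L(R_y f)$, which makes sense because $R_y$ preserves $\RUC(G)$. The crucial point is that $g\in\LUC(G)$: from $R_{yb}=R_y R_b$ and linearity one gets $\|R_b g - g\| = \sup_y |m_L(R_y(R_b f - f))| \le \|R_b f - f\|$, which tends to $0$ as $b\to e$ because $f\in\LUC(G)$. This step — where the uniform-continuity hypothesis genuinely enters and forces one to stay inside $\UC(G)$ rather than $l^\infty(G)$ — is the main point requiring care. Now set $M(f) := m_R(g) = m_R^y\!\left(m_L^x(f(xy))\right)$, which is visibly a mean. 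Invariance follows by tracking the inner function: the function $x\mapsto (L_a f)(xy)$ equals $L_a(R_y f)$, so by left-invariance of $m_L$ its $m_L$-value is $m_L(R_y f)=g(y)$, giving $M(L_a f)=m_R(g)=M(f)$; and $x\mapsto (R_a f)(xy)$ equals $R_{ya}f$, so its $m_L$-value is $g(ya)=(R_a g)(y)$, giving $M(R_a f)=m_R(R_a g)=m_R(g)=M(f)$ by right-invariance of $m_R$ (using that $R_a$ preserves $\LUC(G)$).

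Finally, to upgrade $M$ to an inversion-invariant mean I would symmetrize. Put $M^\vee(f):=M(f^\vee)$ on $\UC(G)$; using $(L_a f)^\vee = R_a f^\vee$ and $(R_a f)^\vee = L_a f^\vee$ one checks $M^\vee$ is again a left- and right-invariant mean. Then $\widetilde M := \frac{1}{2}\!\left(M + M^\vee\right)$ is a two-sided invariant mean, and since $(f^\vee)^\vee = f$ we get $\widetilde M(f^\vee) = \frac{1}{2}\!\left(M(f^\vee)+M(f)\right) = \widetilde M(f)$, so $\widetilde M$ is also invariant under $f\mapsto f^\vee$, which completes the proof. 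The only genuinely delicate ingredient, as noted, is the verification that the partially averaged function $y\mapsto m_L(R_y f)$ lands back in $\LUC(G)$; everything else is bookkeeping with the translation identities.
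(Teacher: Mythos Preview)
Your argument is correct. The identities $(L_g f)^\vee = R_g(f^\vee)$ and $(R_g f)^\vee = L_g(f^\vee)$ are right, the verification that $y\mapsto m_L(R_y f)$ lands in $\LUC(G)$ is the genuine content and you handle it correctly, and the final symmetrization is clean. One tiny remark: you should observe once that $R_y$ preserves $\RUC(G)$ (so that $m_L(R_y f)$ makes sense) and that $R_a$ preserves $\LUC(G)$ (so that $m_R(R_a g)$ makes sense); both follow from the commutation $L_hR_y = R_yL_h$ just as in your continuity estimate, but it is worth saying explicitly.

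The paper takes a different, less constructive route: the corollary is stated immediately after Day's Fixed Point Theorem and is meant to be read as a direct consequence of it. In that spirit one argues as follows. The group $G$ acts continuously and affinely on the weak-$*$ compact convex set $\gM(\LUC(G))$ via $R$, so Day's theorem yields a right-invariant mean on $\LUC(G)$. For the two-sided part, the set of left-$G$-invariant means on $\UC(G)$ is nonempty (restrict any invariant mean on $\RUC(G)$), weak-$*$ compact and convex, and stable under the right $G$-action because $L$ and $R$ commute; applying Day's theorem again produces a bi-invariant mean. Finally, the set of bi-invariant means is stable under $\mu\mapsto\mu^\vee$, and averaging with this involution (or invoking amenability of $\Z/2\Z$) gives a mean that is also invariant under $f\mapsto f^\vee$.

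What each approach buys: the paper's fixed-point argument is shorter and conceptually uniform---every invariance is obtained the same way, by exhibiting a compact convex $G$-set---but it is nonconstructive at every step. Your double-averaging argument is the classical hands-on construction; it is more explicit about \emph{how} the bi-invariant mean is built from $m_L$, and it isolates precisely where the hypothesis $f\in\UC(G)$ (rather than merely $f\in\RUC(G)$) is needed.
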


The following theorem collects many of the most important lemmas for constructing amenable groups. 

\begin{theorem}(\cite[Proposition G.2.2]{BHV}, \cite[Theorem 449C, Corollary 449F]{Fremlin})
Let $G$ be a topological group.  Then
\begin{enumerate}
\item If $G$ is compact, then $G$ is amenable.
\item If $G$ abelian, then $G$ is amenable.
\item If $G$ is amenable and $H$ is a closed normal subgroup of $G$, then $G/H$ is amenable.
\item If $G$ is amenable and $H$ is an open subgroup of $G$, then $H$ is amenable.
\item If $G$ is amenable and $H$ is a dense subgroup of $G$, then $H$ is amenable.
\item If $H$ is a closed normal subgroup of $G$ such that $H$ and $G/H$ are amenable, then $G$ is amenable.
\end{enumerate}
\end{theorem}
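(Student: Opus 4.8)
\noindent The plan is to handle the six items in two groups. Items (1) and (2) establish amenability directly, and I would derive both from part (3) of Day's theorem, though (1) also admits a one-line argument: a compact group carries a normalized Haar probability measure $\nu$, and since $\RUC(G)\subseteq C(G)$ the functional $f\mapsto\int_G f\,d\nu$ is a left-invariant mean on $\RUC(G)$. For (2) I would invoke the Markov--Kakutani fixed-point argument for a continuous affine action of the abelian group $G$ on a compact convex set $K$: for each $g\in G$ and $v\in K$ the averages $\frac1n\bigl(v+g\cdot v+\cdots+g^{n-1}\cdot v\bigr)$ cluster at a point fixed by $g$, so each fixed-point set $\mathrm{Fix}(g)$ is a nonempty compact convex subset of $K$; since $G$ is abelian, $\mathrm{Fix}(g)$ is invariant under every $h\in G$, so repeating the averaging inside finite intersections of such sets shows the family $\{\mathrm{Fix}(g):g\in G\}$ has the finite-intersection property, and compactness of $K$ then produces a common fixed point.

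For items (3)--(6) I would use one template: relate $G$ to the group $H$ (or to $G/H$) by a natural map $\pi$, use $\pi$ to build a linear operator that is positive, sends $\mathbf 1$ to $\mathbf 1$, and intertwines the left-translation actions, carrying the relevant space of uniformly continuous functions into $\RUC(G)$ (or conversely), and then transport an invariant mean across it. For (3) I would take $\pi=q\colon G\to G/H$ and let $Tf=f\circ q$; since $q$ is a homomorphism one gets $L_s(f\circ q)=(L_{sH}f)\circ q$, which shows simultaneously that $f\circ q\in\RUC(G)$ and that $f\mapsto\mu(f\circ q)$ is an invariant mean on $\RUC(G/H)$. For (4) I would fix a right transversal $T_0\ni e$ of $H$ in $G$ and extend $f\in\RUC(H)$ to $\widetilde f$ on $G=\bigsqcup_{t\in T_0}Ht$ by $\widetilde f(ht)=f(h)$; because $H$ is open, a neighbourhood basis of $e$ in $H$ is also one in $G$ and $\|L_s\widetilde f-\widetilde f\|_\infty=\|L_sf-f\|_\infty$ for $s\in H$, so $\widetilde f\in\RUC(G)$ and $f\mapsto\mu(\widetilde f)$ is an $H$-invariant mean. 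For (5) I would extend a bounded right-uniformly continuous function on the dense subgroup $H$ to its unique bounded right-uniformly continuous extension $\widetilde f$ on $G$ (legitimate because $H$ carries the right uniformity induced from $G$); this extension is linear, positive, unital and commutes with $L_k$ for $k\in H$, so pulling back a $G$-invariant mean gives an $H$-invariant mean.

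The substantive case is (6), the extension property, which also contains (3) as a degenerate instance. Given invariant means $\mu_H$ on $\RUC(H)$ and $\mu_{G/H}$ on $\RUC(G/H)$ and a function $f\in\RUC(G)$, I would observe that $h\mapsto f(gh)$ is the restriction to $H$ of $L_{g^{-1}}f\in\RUC(G)$, hence lies in $\RUC(H)$, and set $F(g)=\mu_H\bigl(h\mapsto f(gh)\bigr)$. Left-invariance of $\mu_H$ gives $F(gh')=F(g)$ for $h'\in H$, so $F$ factors through $q$; and because $(g_1h)(g_2h)^{-1}=g_1g_2^{-1}$ does not involve $h$, right-uniform continuity of $f$ forces $|F(g_1)-F(g_2)|\le\sup_h|f(g_1h)-f(g_2h)|\to 0$ as $g_1g_2^{-1}\to e$, so $F\in\RUC(G)$ and therefore $F=\overline F\circ q$ for some $\overline F\in\RUC(G/H)$ --- here one uses normality of $H$ to see that $q$ is a uniform quotient map. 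I would then set $\mu_G(f):=\mu_{G/H}(\overline F)$, which is visibly positive, unital and linear; and since the function attached to $L_{g_0}f$ is $L_{g_0}F$, which descends to $L_{g_0H}\overline F$, left-invariance of $\mu_{G/H}$ makes $\mu_G$ left-invariant, so $G$ is amenable.

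I expect the main obstacle to be the uniform-continuity bookkeeping, concentrated in (6) but present in milder form in (4) and (5): one must check that the restriction, extension and quotient operations really preserve membership in $\RUC$ and not merely boundedness. This is exactly the phenomenon that makes arbitrary (closed) subgroups of amenable topological groups fail to be amenable in general, and it is why the extra hypotheses ``open'' and ``dense'' appear in (4) and (5).
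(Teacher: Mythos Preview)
Your arguments are correct, and for items (1), (2), and (3) they coincide with the paper's: Haar measure for (1), Markov--Kakutani for (2), and pullback along the quotient map for (3). For (4) and (5) the paper simply defers to the cited references, so your transversal-extension and uniform-extension arguments supply what the paper omits; both are standard and work as written.

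The one genuine divergence is in (6). The paper proves the extension property via Day's fixed-point characterization: if $G$ acts affinely on a compact convex $K$, then $K^{H}$ is nonempty (since $H$ is amenable), closed and convex, and carries a continuous affine $G/H$-action, so amenability of $G/H$ yields a fixed point in $K^{H}$, which is then $G$-fixed. Your route instead builds the invariant mean directly as an iterated mean, $\mu_G(f)=\mu_{G/H}\bigl(\,\overline{g\mapsto\mu_H(h\mapsto f(gh))}\,\bigr)$, and verifies by hand that the intermediate function lies in $\RUC(G/H)$. The paper's approach is shorter and sidesteps all the uniform-continuity bookkeeping you flag as the main obstacle; your approach is more explicit, does not invoke Day's theorem as a black box, and makes transparent exactly which mean on $G$ one obtains from given means on $H$ and $G/H$. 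Both are classical and each is worth knowing.
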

\begin{proof}
To prove (1), we note that if $G$ is compact, then $\RUC(G)=C(G)$.  Thus Haar measure provides an invariant mean on $\RUC(G)$.

Statement (2) is the Markov-Kakutani fixed-point theorem. See \cite[Theorem G.2.1]{BHV} for a proof.

To prove (3), we suppose that $G$ is amenable and $H$ is a closed normal subgroup.  Write $p:G\rightarrow G/H$ for the canonical quotient map.  We claim that if $f\in \RUC(G/H)$, then $f\circ p\in \RUC(G)$.  In fact, if $||L_{gH} f-f||_\infty <\epsilon$ for all $gH$ in some neighborhood $V$ of $eH$, then $||L_g (f\circ p)-f\circ p||_\infty <\epsilon$ for all $g\in p^{-1}(V)$, and the claim follows.  Furthermore, $||f\circ p||_\infty = ||f||_\infty$.  Now let $\mu_G$ be an invariant mean on $\RUC(G)$.  Then we may define a mean $\mu_{G/H}$  on $\RUC(G/H)$ by setting $\mu_{G/H}(f) = \mu_G(f\circ p)$ for all $f\in \RUC(G)$.  It is clear that $\mu_{G/H}$ is $G/H$-invariant.  Thus $G/H$ is amenable.  

See \cite[Corollary 449F]{Fremlin} for the proofs of $(4)$ and $(5)$.

To prove (6), we suppose that $H$ is a closed normal subgroup of $G$ such that $H$ and $G/H$ are amenable.  Let $G$ act continuously and affinely on a compact convex subset $K$ of a locally-convex vector space $V$.   We denote by $K^H$ the set of all $H$-fixed points in $K$.  Because $H$ is amenable, Day's theorem shows that $K^H$ is nonempty.  It is not difficult to show that $K^H$ is a closed, convex subset of $V$.  Furthermore, the action of $G$ on $K^H$ factors through to a well-defined, continuous action of $G/H$ on $K^H$ defined by $gH\cdot v = g \cdot v$ for all $v\in K$ and $gH\in G/H$.  Finally, $K^H$ must possess a $G/H$-fixed point $x$ because $G/H$ by Day's Theorem because $G/H$ is amenable.  Thus $x$ is a $G$-fixed in $K$.  Hence $G$ is amenable.

\end{proof}

It is well-known that every closed subgroup of an amenable \textit{locally compact} group is amenable.  We caution the reader, however, that this result is not true for groups which are not locally-compact (see \cite[p. 457] {BHV}).

There is one more well-known method of constructing amenable groups which is of particular interest to us here: any direct limit of amenable groups is again amenable.

\begin{theorem}(\cite[Proposition 13.6]{JPP}).
\label{directLimitsAmenable}
Suppose that $I$ is a linearly-ordered index set and that $\{G_n\}_{n\in I}$ is an increasing chain of amenable subgroups (that is, $G_n\leq G_m$ if $n\leq m$).  Then $G_\infty \equiv \varinjlim G_n = \cup_{n\in I} G_n$ is amenable. 
\end{theorem}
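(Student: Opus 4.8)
The plan is to build an invariant mean on $\RUC(G_\infty)$ as a weak-$*$ cluster point of the invariant means carried by the pieces $G_n$. First I would fix, for each $n\in I$, an invariant mean $\mu_n\in\gM(\RUC(G_n))$, which exists since $G_n$ is amenable. The essential preliminary point is that restriction along the inclusion $G_n\hookrightarrow G_\infty$ maps $\RUC(G_\infty)$ into $\RUC(G_n)$: because $G_n$ carries the subspace topology, the right uniformity of $G_\infty$ restricted to $G_n\times G_n$ is exactly the right uniformity of $G_n$, so $f\mapsto f|_{G_n}$ sends a bounded right-uniformly continuous function on $G_\infty$ to one on $G_n$, preserving the sup-norm and sending constants to constants. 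Hence $\nu_n(f):=\mu_n(f|_{G_n})$ defines an element $\nu_n\in\gM(\RUC(G_\infty))$.

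Next I would view $(\nu_n)_{n\in I}$ as a net indexed by the directed set $I$ and use weak-$*$ compactness of $\gM(\RUC(G_\infty))$ (Banach--Alaoglu, as recorded in Section~\ref{meansSection}) to pass to a subnet $(\nu_{n_\alpha})_\alpha$ converging weak-$*$ to some $\nu$. Since $\gM(\RUC(G_\infty))$ is weak-$*$ closed, $\nu$ is again a mean; in particular $\nu(\mathbf{1})=1$, so $\nu$ is nontrivial. It then remains only to check $G_\infty$-invariance. Fix $g\in G_\infty$ and $f\in\RUC(G_\infty)$. As $G_\infty=\bigcup_n G_n$ is an increasing union, $g\in G_{n_0}$ for some $n_0$, and for every $n\geq n_0$ we have $g\in G_n$, whence $(L_g f)|_{G_n}=L_g(f|_{G_n})$ because $g^{-1}x$ stays in $G_n$ when $x\in G_n$. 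Since $\mu_n$ is $G_n$-invariant, $\nu_n(L_g f)=\mu_n(L_g(f|_{G_n}))=\mu_n(f|_{G_n})=\nu_n(f)$ for all $n\geq n_0$. The subnet eventually has index $\geq n_0$, so taking the weak-$*$ limit gives $\nu(L_g f)=\nu(f)$; as $g$ and $f$ were arbitrary, $\nu$ is an invariant mean and $G_\infty$ is amenable.

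The only real subtlety — and the step I would be most careful about — is the compatibility of the right uniformities needed to make the restriction map $\RUC(G_\infty)\to\RUC(G_n)$ well-defined; the rest is a soft net-convergence argument. As an alternative one could run the whole proof through Day's fixed-point theorem: given a continuous affine action of $G_\infty$ on a compact convex set $K$ in a locally convex space, each fixed-point set $K^{G_n}$ is a nonempty (by amenability of $G_n$), closed, convex subset of $K$, and these sets decrease as $n$ increases, so by compactness and the finite-intersection property $\bigcap_{n\in I}K^{G_n}=K^{G_\infty}\neq\emptyset$, which gives a $G_\infty$-fixed point and hence amenability. Either route works; I would present the mean-theoretic one as it is the most self-contained given the machinery already developed.
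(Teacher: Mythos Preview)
Your proof is correct and follows essentially the same route as the paper: push the invariant means $\mu_n$ on $G_n$ forward to means $\nu_n$ on $\RUC(G_\infty)$ via restriction, then take a weak-$*$ cluster point using Banach--Alaoglu and verify invariance by noting that each $g\in G_\infty$ lies in some $G_{n_0}$ and hence fixes all $\nu_n$ with $n\ge n_0$. You supply more detail than the paper on the uniformity compatibility and on the net-convergence step, and your aside about the Day fixed-point alternative is a nice complement, but the core argument is the same.
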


\begin{proof}
For each $n\in I$, choose an invariant mean $m_n$ for $G_n$.  We then define a functional $\mu_n\in\mathrm{\RUC}(G_\infty)^*$ by
\[
   \mu_n(f) = m_n(f|_{G_n})
\]
for each $f\in\mathrm{\RUC}(G_\infty)$.  Because $f|_{G_n}\in \RUC(G_n)$ for all $f\in \RUC(G_\infty)$, it is clear that each $\mu_n$ is a mean on $G_\infty$.  Furthermore, we see that $\mu_n(L_g f) = \mu_n(f)$ whenever $g\in G_n\leq G_\infty$.  Thus, any weak-$*$ cluster point of the set $\{\mu_n\}_{n\in I}\subseteq \mathrm{\RUC}(G_\infty)^*$ would be invariant under $G_\infty = \cup_{n\in I} G_n$.  Furthermore, by the Banach Alaoglu theorem, the unit ball in $\mathrm{\RUC}(G_\infty)^*$ is weak-$*$ compact and thus our sequence must possess a cluster point.
\end{proof}

Because $\mathrm{\RUC}(G_\infty)$ is not separable when $G_\infty$ is not compact, the unit ball in $\mathrm{\RUC}(G_\infty)^*$ is not guaranteed to be weak-$*$ \textit{sequentially} compact.  Thus there is no reason to expect that $\{\mu_n\}_{n\in\N}\subseteq \mathrm{\RUC}(G_\infty)^*$ will possess a convergent sequence.  In fact, an application of the Axiom of Choice is required to construct an invariant mean on $G_\infty$.  

An immediate corollary of Theorem~\ref{directLimitsAmenable} is that every group formed as a direct limit of compact groups is amenable.

\section{Unitarizability}
\label{unitarySection}
\noindent In this section we look at some applications of invariant means to the theory of unitary representations for topological groups.  For a given topological group, it is often very difficult to determine which representations of the group on a Hilbert space are in fact equivalent to unitary representations.  For amenable groups, however, there is a very succinct solution to this question:

\begin{theorem}(\cite[Proposition 17.5]{JPP}).
Suppose that $G$ is an amenable group and that $\pi$ is a  continuous representation of $G$ on a separable Hilbert space $\cH$.  Then $\pi$ is equivalent to a unitary representation if and only if it is uniformly bounded (that is, $ \sup_{g\in U_\infty} ||\pi(g)|| <\infty$).
\end{theorem}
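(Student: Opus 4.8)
The plan is to run the classical averaging argument for unitarizability, with the (nonexistent) Haar integral replaced by an invariant mean. One implication needs no amenability: if $V\pi(\cdot)V^{-1}$ is unitary for some bounded invertible operator $V$, then $\|\pi(g)\| = \|V^{-1}(V\pi(g)V^{-1})V\| \le \|V^{-1}\|\,\|V\|$ for every $g$, so $\pi$ is uniformly bounded. For the converse I would set $M := \sup_{g\in G}\|\pi(g)\| < \infty$ and, for $\xi,\eta\in\cH$, form the function
\[
\psi_{\xi,\eta}(g) := \ip{\pi(g)\xi}{\pi(g)\eta}.
\]
The first step is to check that $\psi_{\xi,\eta}\in\LUC(G)$: it is bounded by $M^2\|\xi\|\,\|\eta\|$, and from $R_h\psi_{\xi,\eta} = \psi_{\pi(h)\xi,\pi(h)\eta}$ one estimates $\ip{\pi(g)\pi(h)\xi}{\pi(g)\pi(h)\eta}-\ip{\pi(g)\xi}{\pi(g)\eta}$ by splitting off one factor at a time; uniform boundedness of $\pi$ together with its strong continuity then gives $\|R_h\psi_{\xi,\eta}-\psi_{\xi,\eta}\|_\infty\to 0$ as $h\to e$, which is exactly left-uniform continuity.

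Since $G$ is amenable, the corollary to Day's Fixed Point Theorem furnishes a right-$G$-invariant mean $m$ on $\LUC(G)$, and I would define $\du{\xi}{\eta} := m(\psi_{\xi,\eta})$. The second step is to show that $\du{\cdot}{\cdot}$ is an inner product equivalent to $\ip{\cdot}{\cdot}$: sesquilinearity passes from $\ip{\cdot}{\cdot}$ through the linearity of $m$, and positivity follows from $\psi_{\xi,\xi}\ge 0$ together with positivity of $m$. For the equivalence, note that $\|\xi\|/M \le \|\pi(g)\xi\| \le M\|\xi\|$ for all $g$ (the lower bound because $\xi = \pi(g)^{-1}\pi(g)\xi$), so $M^{-2}\|\xi\|^2 \le \psi_{\xi,\xi}(g) \le M^2\|\xi\|^2$, and since $m$ preserves such bounds, $M^{-2}\|\xi\|^2 \le \du{\xi}{\xi} \le M^2\|\xi\|^2$. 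Right-invariance of $m$ gives $\du{\pi(h)\xi}{\pi(h)\eta} = m(R_h\psi_{\xi,\eta}) = \du{\xi}{\eta}$, so each $\pi(h)$ is unitary for $\du{\cdot}{\cdot}$. Finally I would write $\du{\xi}{\eta} = \ip{S\xi}{\eta}$ for a positive, bounded, invertible operator $S$ (in fact $M^{-2}I\le S\le M^2I$), set $T := S^{1/2}$, and verify that $U(g):=T\pi(g)T^{-1}$ is a continuous unitary representation on $(\cH,\ip{\cdot}{\cdot})$ equivalent to $\pi$.

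The main obstacle is the first step: placing $\psi_{\xi,\eta}$ — a matrix coefficient of $\pi\otimes\overline{\pi}$ — into a function space that carries an invariant mean. It is \emph{not} in $\RUC(G)$ in general (that would force $\pi$ to be norm-continuous), which is exactly why one must pass to $\LUC(G)$ and use a \emph{right}-invariant mean, and why strong continuity of $\pi$ is the hypothesis that makes the argument go through. Everything afterward — sesquilinearity, the two-sided norm bound, invariance, and the construction of the intertwiner $T$ — is routine functional analysis, and the separability of $\cH$ plays no essential role in it.
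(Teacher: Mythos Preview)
Your approach is essentially the paper's: form the function $g\mapsto\ip{\pi(g)\xi}{\pi(g)\eta}$, average it against an invariant mean, and obtain an equivalent inner product for which $\pi$ is unitary. The only noteworthy difference is that you are more careful about the function space. The paper asserts that this matrix coefficient is bi-uniformly continuous and averages with a bi-invariant mean on $\UC(G)$; you observe (correctly) that strong continuity of $\pi$ only forces $\psi_{\xi,\eta}\in\LUC(G)$, since $R_h\psi_{\xi,\eta}=\psi_{\pi(h)\xi,\pi(h)\eta}$, while membership in $\RUC(G)$ would require $\|\pi(h)^*\pi(h)-I\|\to 0$, i.e.\ norm continuity. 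Accordingly you use a right-invariant mean on $\LUC(G)$, which is exactly the invariance needed to make $\pi$ unitary for $\du{\cdot}{\cdot}$. You also spell out the final step---writing $\du{\xi}{\eta}=\ip{S\xi}{\eta}$ with $M^{-2}I\le S\le M^2 I$ and taking $T=S^{1/2}$ as the intertwiner---which the paper leaves implicit. In short, your argument is a tightened version of the same proof.
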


\begin{proof}
 Suppose that $\pi$ is equivalent to a unitary representation. Then there is an invertible bounded intertwining operator $T\in\GL(\cH)$  unitarizes $\pi$.  It follows that $T\pi(g)T^{-1}$ is unitary and thus $||\pi(g)||\leq ||T||||T^{-1}||$ for all $g\in U_\infty$, and thus $\pi$ is uniformly bounded.

To prove the converse, let $M = \sup_{g\in U_\infty} ||\pi(g)||$.  Note that one also has that $M = \sup_{g\in\U_\infty} ||\pi(g)^{-1}||$. It follows that 
\[
    M^{-1} ||u|| \leq ||\pi(g)u|| \leq M ||u||
\]
for all $g\in U_\infty$.

Now let $\mu$ be a bi-invariant mean on $G$.  We denote the inner product on $\cH$ by $\du{\cdot\,}{\cdot}_\cH$. Note that $g\mapsto \langle\pi(g)u,\pi(g)v\rangle_\cH$ is a uniformly continuous, bounded function on $G$ (since $\pi$ is strongly continuous and uniformly bounded).    We may thus define a new inner product $\du{\cdot\,}{\cdot}_\mu$ on $\cH$ by 
\[\langle u,v\rangle_\mu = \int_G \langle \pi(g)u,\pi(g)v \rangle_\cH d\mu(g)\]
for all $u,v\in\cH$, where for clarity we have used the ``integral'' notation for means that was introduced in Section~\ref{meansSection}.  It is clear that $\du{\cdot\,}{\cdot}_\mu$ provides a positive semi-definite Hermitian form on $\cH$.  

Note that for $u\in\cH\backslash\{0\}$ one has that 
\[
   0 < M^{-2}||u||_\cH^2 \leq ||u||_\mu^2 = \int_G ||\pi(g)u||_\cH^2 d\mu(g) \leq M^2||u||_\cH^2.
\]
Thus $\du{\cdot\,}{\cdot}_\mu$ is strictly positive-definite and continuous with respect to $\du{\cdot\,}{\cdot}_\cH$.
\end{proof}

For a compact group $G$, every continuous representation is uniformly bounded, and hence the above theorem amounts to the fact that \textit{every} continuous representation of $G$ on a Hilbert space is equivalent to a unitary representation.  Unfortunately, the same does not hold true for direct limits of compact groups, as we now briefly demonstrate.

Consider the group $U_\infty = \SU(\infty) = \varinjlim \SU(2n)$.  For each $n\in\N$, consider the standard representation $\pi_n$ of $\SU(2n)$ on $\cH_n = \C^{2n}$ (that is, $\pi_n(g)v = g\cdot v$ for all $g\in\SU(2n)$).  By taking the direct limit, we may form a unitary representation $\pi = \varinjlim \pi_n$ of $\SU(\infty)$ on the Hilbert space $\cH = \ell^2(\C) = \overline{\varinjlim \C^{2n}}$ of square-summable sequences of complex numbers.  Note that $\SU(2n)$ acts trivially on the orthogonal complement of $\cH_n$. It follows that $\pi|_{\SU(2n)}$ decomposes into a direct sum of the standard representation $\pi_n$ and infinitely many copies of the trivial irreducible representation.  That is,
\[ 
   \pi|_{\SU(2n)} = \pi_n \oplus \infty\cdot 1_{\SU(2n)},
\]
where $1_{\SU(2n)}$ denotes the trivial irreducible representation of $\SU(2n)$ on $\C$.  


Now let $V_1 = \cH_1$ and define $V_n = \cH_n\ominus \cH_{n-1}$ for each $n>1$. 
Note that $\dim V_n = 2$ for each $n\in\N$.
We now completely discard unitarity and choose some new inner product $\langle,\rangle_{V_n}$ on $V_n$ under which $||\pi(g)|_{V_n}||\geq n$ for some $g\in \SU(2n)$.  For instance, if $\pi(g)v = w$, where $v,w\in V_n$ are linearly independent, then we can choose any inner product $\langle,\rangle_{V_n}$  on $V_n$ such that $||v||_{V_n}=1$ and $||w||_{V_n} = n$.

Next we define for each $n\in\N$ the finite-dimensional Hilbert space
\[
   \cK_n = \bigoplus_{i=1}^n V_i,
\]
where each $V_i$ is given the new inner product we just defined.  As vector spaces, $\cK_n = \cH_n$, but they possess different inner products. Now $\{(\pi_n,\cK_n)\}_{n\in\N}$ forms a direct system of continuous Hilbert representations.  We consider the representation $(\widetilde{\pi}_\infty,\cK_\infty) = (\varinjlim \pi_n,\overline{\varinjlim \cK_n})$. Note that  $\pi|_{\SU(2n)}$ and $\widetilde{\pi}|_{\SU(2n)}$ possess the same irreducible subrepresentations for each $n\in\N$.  
Finally, it is clear that $\widetilde{\pi}$ is not uniformly bounded (since $\sup_{g\in \SU(2n)} ||\pi(g)||\geq n$ for each $n\in\N$), and is therefore not unitarizable.

\section{Invariant Means and Regular Representations}
\noindent Unitary representations for locally compact groups are, of course, closely related to harmonic analysis.  For instance, if $G$ is a locally compact group, then decomposing the unitary regular representation of $G$ on $L^2(G)$ is one of the foundational problems in harmonic analysis.  While groups which are not locally compact do not possess Haar measures, one can develop an $L^2$-theory using invariant means (these definitions may also be found in, for instance, \cite{Bel}).  

In particular, suppose that $G$ is a topological group and $\cA$ is a closed $C^*$-subalgebra of $\RUC(G)$.  For each invariant mean $\mu\in \gM(\cA)$, one can construct a Hilbert space $L^2_\mu(G)$ as follows.  Define a pre-Hilbert seminorm on $\cA$ by \[\langle f,g \rangle_\mu = \mu(f\bar{g}),\]
and set $\cK_\mu=\{f\in \cA | \langle f,f\rangle_\mu = 0\}$.  Then $L^2_\mu(G)$ is defined to be the Hilbert-space completion of $\cA/\cK_\mu$.  

\begin{lemma}
$\cK_\mu$ is a closed subspace of $\cA$.
\end{lemma}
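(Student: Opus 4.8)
The plan is to show that $\cK_\mu = \{f \in \cA : \mu(|f|^2) = 0\}$ is closed by exhibiting it as the kernel of a bounded linear map, or more directly by a Cauchy--Schwarz argument. First I would observe that the seminorm $\|f\|_\mu = \mu(f\bar f)^{1/2}$ satisfies the Cauchy--Schwarz inequality $|\mu(f\bar g)| \leq \|f\|_\mu \|g\|_\mu$, which is the standard consequence of $\langle \cdot, \cdot \rangle_\mu$ being a positive semi-definite Hermitian form on $\cA$ (this was already noted in the text just before the lemma). In particular, if $\|f\|_\mu = 0$ then $\mu(f\bar g) = 0$ for every $g \in \cA$; conversely this characterizes $\cK_\mu$, so $\cK_\mu = \bigcap_{g \in \cA} \ker(h \mapsto \mu(h\bar g))$. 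Each functional $h \mapsto \mu(h\bar g)$ is bounded on $\cA$ (since $|\mu(h\bar g)| \leq \|h\bar g\|_\infty \leq \|g\|_\infty \|h\|_\infty$, using that $\mu$ is a mean and $\cA$ is closed under products and conjugation), hence has closed kernel, and an intersection of closed sets is closed.

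Alternatively, and perhaps more cleanly, I would argue directly from the seminorm. The key inequality is that for $f, h \in \cA$,
\[
\bigl|\,\|f\|_\mu - \|h\|_\mu\,\bigr| \leq \|f - h\|_\mu \leq \|f - h\|_\infty,
\]
where the first step is the triangle inequality for the seminorm $\|\cdot\|_\mu$ (valid for any positive semi-definite Hermitian form) and the second follows from $\|f-h\|_\mu^2 = \mu(|f-h|^2) \leq \|\,|f-h|^2\,\|_\infty = \|f-h\|_\infty^2$, using monotonicity of the mean $\mu$. This shows $f \mapsto \|f\|_\mu$ is continuous from $(\cA, \|\cdot\|_\infty)$ to $\R$. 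Since $\cK_\mu = \{f \in \cA : \|f\|_\mu = 0\}$ is the preimage of the closed set $\{0\}$ under this continuous map, it is closed in $\cA$. I would also remark in passing that $\cK_\mu$ is a linear subspace, which follows immediately from the triangle inequality and homogeneity of the seminorm (so that $L^2_\mu(G)$ is well-defined as the completion of the normed space $\cA/\cK_\mu$).

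I expect no serious obstacle here: the only thing to be careful about is justifying the two ingredients — that $\|\cdot\|_\mu$ is genuinely a seminorm (i.e., satisfies the triangle inequality), which needs the positive semi-definiteness of $\langle\cdot,\cdot\rangle_\mu$ already asserted in the text, and that $\mu$ is monotone, which is part of the definition of a mean as recorded in Section~\ref{meansSection}. Both are available, so the proof is a short two-line continuity argument. I would present the seminorm-continuity version as the main proof since it is the most transparent, and it also makes clear why the construction of $L^2_\mu(G)$ as a completion is legitimate.
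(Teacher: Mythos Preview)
Your proof is correct. Your preferred seminorm-continuity argument is essentially the paper's approach in cleaner packaging: the paper picks a sequence $f_k \to f$ in $\cA$ with $f_k \in \cK_\mu$, expands $\langle f,f\rangle_\mu$ bilinearly as $\langle f_k,f_k\rangle_\mu + \langle f-f_k,f-f_k\rangle_\mu + 2\Re\langle f_k,f-f_k\rangle_\mu$, and bounds the last two terms using only the crude estimate $|\langle g,h\rangle_\mu| \leq \|g\|_\infty\|h\|_\infty$. Your version absorbs this expansion into the triangle inequality for $\|\cdot\|_\mu$ together with $\|\cdot\|_\mu \leq \|\cdot\|_\infty$, making the continuity of $f\mapsto\|f\|_\mu$ explicit; this is the same estimate, just organized more transparently. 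Your alternative route via $\cK_\mu = \bigcap_{g}\ker\bigl(h\mapsto \mu(h\bar g)\bigr)$ and Cauchy--Schwarz is a genuinely different (and equally valid) argument that the paper does not use; it has the minor bonus that linearity of $\cK_\mu$ is immediate from the description as an intersection of kernels.
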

\begin{proof}
Suppose that $\{f_k\}_{n\in\N}$ is a sequence in $\cK_\mu$ which converges to $f$ in $\cA$.  Fix $\epsilon > 0$, and choose $N$ such that $||f_k - f||_\infty< \epsilon$ for $n\geq N$.  Then 
\[
\begin{array}{ll}
\langle f, f \rangle_\mu & = \langle f_k + (f_k - f), f_k + (f_k - f)\rangle_\mu \\
              & = \langle f_k, f_k \rangle_\mu + \langle f_k - f, f_k - f \rangle_\mu + 2 \Re \langle f_k, f_k - f \rangle_\mu \\
              & \leq 0 + \epsilon^2 + 2(||f||_\infty+\epsilon)\epsilon, \\
\end{array}
\]
where we use the fact that $\langle g,h \rangle_\mu = \mu(g\bar{h}) \leq ||g||_\infty ||h||_\infty$ for all $g,h\in \cA$.  Because $\epsilon > 0$ was arbitrary, it follows that $f\in \cK_\mu$.
\end{proof}

Because $\cA\subseteq \RUC(G)$, one sees that the left-regular representation of $G$ on $\cA$ defined by $L_gf(h) = f(g^{-1}h)$ is continuous.  

\begin{theorem}
 The regular representation $L$ of $G$ on $\cA$ descends to a continuous unitary representation on $L^2_\mu(G)$.  
\end{theorem}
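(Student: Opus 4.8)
The plan is to verify, in this order, that each $L_g$ is an isometry for the pre-Hilbert seminorm $\|f\|_\mu:=\langle f,f\rangle_\mu^{1/2}$ on $\cA$, that it therefore descends to a unitary operator $\bar L_g$ on $L^2_\mu(G)$, that $g\mapsto\bar L_g$ is a group homomorphism, and finally that it is strongly continuous. The only substantive input is the invariance of $\mu$ together with the hypothesis $\cA\subseteq\RUC(G)$; everything else is routine functional analysis.

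First I would note that for $f,h\in\cA$ and $g\in G$ one has $L_gf\cdot\overline{L_gh}=L_g(f\bar h)$, so invariance of $\mu$ gives $\langle L_gf,L_gh\rangle_\mu=\mu\bigl(L_g(f\bar h)\bigr)=\mu(f\bar h)=\langle f,h\rangle_\mu$. Hence $L_g$ preserves the seminorm; in particular it carries $\cK_\mu$ into $\cK_\mu$, and since $L_{g^{-1}}=L_g^{-1}$ is likewise seminorm-preserving, $L_g(\cK_\mu)=\cK_\mu$. Therefore $L_g$ induces a well-defined linear bijection of $\cA/\cK_\mu$ that is isometric for $\langle\cdot,\cdot\rangle_\mu$, and this extends uniquely to an isometry $\bar L_g$ of the completion $L^2_\mu(G)$; its range is closed and contains the dense subspace $\cA/\cK_\mu$, so $\bar L_g$ is surjective, i.e. unitary. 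The relations $L_gL_h=L_{gh}$ and $L_e=\mathrm{Id}$ hold on $\cA$, descend to the quotient, and then by density pass to $L^2_\mu(G)$, so $g\mapsto\bar L_g$ is a unitary representation.

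It remains to prove strong continuity, which I expect to be the only step requiring any care. On the dense subspace $\cA/\cK_\mu$ it is immediate: for $f\in\cA$,
\[
\langle\bar L_g[f]-[f],\,\bar L_g[f]-[f]\rangle_\mu=\mu\bigl(|L_gf-f|^2\bigr)\le\|L_gf-f\|_\infty^2,
\]
and since $\cA\subseteq\RUC(G)$ the map $g\mapsto L_gf$ is continuous into $l^\infty(G)$, so the right-hand side tends to $0$ as $g\to e$. To extend this to an arbitrary $\xi\in L^2_\mu(G)$, approximate $\xi$ by some $[f]$ within $\varepsilon$ and use $\|\bar L_g\|=1$ in $\|\bar L_g\xi-\xi\|_\mu\le\|\xi-[f]\|_\mu+\|\bar L_g[f]-[f]\|_\mu+\|\bar L_g\xi-\bar L_g[f]\|_\mu\le 2\varepsilon+\|\bar L_g[f]-[f]\|_\mu$, giving continuity at $e$. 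Finally, continuity at an arbitrary $g_0$ follows by writing $\bar L_g\xi-\bar L_{g_0}\xi=\bar L_{g_0}\bigl(\bar L_{g_0^{-1}g}\xi-\xi\bigr)$ and using that $g_0^{-1}g\to e$ as $g\to g_0$ together with $\|\bar L_{g_0}\|=1$. The main obstacle, such as it is, lies only in bookkeeping the two approximations in this last step; the entire algebraic content of the theorem is the one-line invariance identity $\langle L_gf,L_gh\rangle_\mu=\langle f,h\rangle_\mu$.
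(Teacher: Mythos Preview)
Your proof is correct and follows essentially the same approach as the paper: both hinge on the invariance identity $\langle L_gf,L_gh\rangle_\mu=\langle f,h\rangle_\mu$ to get isometries on $\cA/\cK_\mu$, and both use the inequality $\langle f,f\rangle_\mu\le\|f\|_\infty^2$ together with $\cA\subseteq\RUC(G)$ to deduce continuity. Your treatment is in fact more thorough than the paper's, which glosses over the density argument for extending strong continuity from $\cA/\cK_\mu$ to its completion.
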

\begin{proof}
Because $\mu$ is an invariant mean, it follows that $\cK_\mu$ is a closed invariant subspace of $\cA$.  Hence, $L$ descends to a continuous representation of $G_\infty$ on $UCB(G_\infty)/\cK_\mu$.  

Since $\langle f, f \rangle_\mu \leq ||f||_\infty^2$ for all $f\in \cA$, we see that $L$ is a continuous representation in the pre-Hilbert space topology on $\cA$ and descends to a continuous representation in the pre-Hilbert space topology on $\cA/\cK_\mu$.  Furthermore, $L$ acts by isometries in those pre-Hilbert space topologies due to the fact that $\mu$ is an invariant mean on $G$.  Thus $L$ extends to a continuous unitary representation on the Hilbert-space completion $L^2_\mu(G)$ of $\cA/\cK_\mu$.
\end{proof}

One nice aspect of this approach is that it allows the consideration of an $L^2$-theory of harmonic analysis that is intrinsic to the group, in the sense that it actually provides a decomposition of functions on the group $G$ and not on a larger $G$-space, such as with the projective-limit construction mentioned earlier.  The disadvantage is that it depends heavily on the choice of invariant mean $\mu$ and $C^*$-algebra $\cA$, as we shall see.  Furthermore, the fact that the axiom of choice is necessary in many cases to construct invariant means implies that it may not be possible to construct an explicit decomposition of $L^2_\mu(G)$.

It is also possible that $L^2_\mu(G)$ gives the trivial representation of $G$, in which case very little information may be obtained about the functions on $G$.  In general, as the next theorem demonstrates, one can gain information on the size of $L^2_\mu(G)$ (and therefore gauge how much information may be gleaned about functions on $G$) by determining the support of the corresponding measure $\widehat{\mu}$ on $\widehat{\cA}$.

\begin{theorem}
For a $G$-invariant mean $\mu$ on $\cA$, on has the equivalance of unitary representations
\[
L^2_\mu(G)\cong_G L^2(\widehat{A},\widehat{\mu}),
\]
where $L^2(\widehat{A},\widehat{\mu})$ denotes the unitary representation of $G$ corresponding to the action of $G$ on $\widehat{A}$ under the $G$-invariant measure $\widehat{\mu}$ on $\widehat{A}$.
\end{theorem}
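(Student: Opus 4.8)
The plan is to show that the Gelfand transform itself implements the isomorphism. Define $\Phi_0\colon \cA\to L^2(\widehat{\cA},\widehat{\mu})$ by $\Phi_0(f)=\widehat{f}$; this is well-defined because $\widehat{f}\in C(\widehat{\cA})$ and $\widehat{\mu}$ is a probability measure. Since the Gelfand transform is a $*$-isomorphism of $\cA$ onto $C(\widehat{\cA})$, we have $\widehat{f\bar g}=\widehat{f}\,\overline{\widehat{g}}$ for $f,g\in\cA$, and hence, using the correspondence $\mu\leftrightarrow\widehat{\mu}$ from Section~\ref{meansSection},
\[
\langle f,g\rangle_\mu=\mu(f\bar g)=\int_{\widehat{\cA}}\widehat{f\bar g}\,d\widehat{\mu}=\int_{\widehat{\cA}}\widehat{f}\,\overline{\widehat{g}}\,d\widehat{\mu}=\langle\widehat{f},\widehat{g}\rangle_{L^2(\widehat{\cA},\widehat{\mu})}.
\]
Thus $\Phi_0$ is an isometry of the pre-Hilbert space $(\cA,\langle\cdot,\cdot\rangle_\mu)$ into $L^2(\widehat{\cA},\widehat{\mu})$, and $\ker\Phi_0$ is exactly $\cK_\mu$. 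Therefore $\Phi_0$ descends to an isometric embedding $\cA/\cK_\mu\hookrightarrow L^2(\widehat{\cA},\widehat{\mu})$, which extends by continuity to an isometric embedding $\Phi\colon L^2_\mu(G)\hookrightarrow L^2(\widehat{\cA},\widehat{\mu})$ of the completion.

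Next I would prove surjectivity. The image of $\Phi$ is closed, being the image of a complete space under an isometry, and it contains the image of $\Phi_0$, which is all of $C(\widehat{\cA})$. Since $\widehat{\mu}$ is a Radon measure on the compact Hausdorff space $\widehat{\cA}$, continuous functions are dense in $L^2(\widehat{\cA},\widehat{\mu})$ (by regularity of Radon measures, e.g.\ via Lusin's theorem). Hence the image of $\Phi$ is all of $L^2(\widehat{\cA},\widehat{\mu})$, so $\Phi$ is a unitary isomorphism.

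It remains to check that $\Phi$ intertwines the two $G$-actions. Recall $G$ acts on $\widehat{\cA}$ by $(g\cdot\lambda)(f)=\lambda(L_{g^{-1}}f)$, and correspondingly on $C(\widehat{\cA})$, hence on $L^2(\widehat{\cA},\widehat{\mu})$, by $(g\cdot F)(\lambda)=F(g^{-1}\cdot\lambda)$. For $f\in\cA$ and $\lambda\in\widehat{\cA}$,
\[
\widehat{L_g f}(\lambda)=\lambda(L_g f)=(g^{-1}\cdot\lambda)(f)=\widehat{f}(g^{-1}\cdot\lambda)=(g\cdot\widehat{f})(\lambda),
\]
so $\Phi_0(L_g f)=g\cdot\Phi_0(f)$. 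As $\cA/\cK_\mu$ is dense in $L^2_\mu(G)$ and all the operators involved are continuous, this relation extends to all of $L^2_\mu(G)$, which completes the proof.

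The only step that requires more than formal bookkeeping is the surjectivity step, where one invokes the density of $C(\widehat{\cA})$ in $L^2$ of a Radon measure; everything else is a direct consequence of the Gelfand transform being a $*$-isomorphism that intertwines the $G$-actions, together with the mean--measure correspondence already established. I do not anticipate a genuine obstacle here; the only thing to watch is keeping the $g$ versus $g^{-1}$ conventions consistent in the intertwining computation.
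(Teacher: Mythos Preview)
Your proposal is correct and follows exactly the same route as the paper: use the Gelfand transform $f\mapsto\widehat{f}$ as the intertwining map, and invoke the identity $\mu(f)=\int_{\widehat{\cA}}\widehat{f}\,d\widehat{\mu}$ to see that it is isometric for the two pre-Hilbert structures. The paper's proof is a two-line sketch of precisely this argument; you have simply (and correctly) filled in the details on surjectivity via density of $C(\widehat{\cA})$ in $L^2(\widehat{\cA},\widehat{\mu})$ and on the intertwining computation.
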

\begin{proof}
The map $\cA\rightarrow C(\widehat{\cA})$, $f\mapsto \widehat{f}$ is clearly a $G$-intertwining operator.  The fact that it extends to the required unitary intertwining operator follows from the fact that $\mu(f) = \int_{\widehat{\cA}} \widehat{f}(x)d\widehat{\mu}(x)$ for all $f\in \cA$.
\end{proof}

\begin{corollary}(\cite[Remark 3.11]{Bel})
One has $\dim L^2_\mu(G) = 1$ if and only if there is a $G$-invariant character $x\in\widehat{\cA}$ such that $\widehat{\mu} = \delta_x$.  In that case, one has the decomposition
\[
\cA = \C \text{\textbf{1}} \oplus \cK_\mu,
\]
where $\C\textbf{1}$ denotes the space of constant functions on $G$.
\end{corollary}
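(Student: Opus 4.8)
The plan is to deduce this from the $G$-equivariant identification $L^2_\mu(G)\cong_G L^2(\widehat{\cA},\widehat{\mu})$ furnished by the preceding theorem, which converts the statement into a fact about Radon probability measures on the compact Hausdorff space $\widehat{\cA}$. First I would isolate the following measure-theoretic lemma: for a Radon probability measure $\nu$ on a compact Hausdorff space $X$, one has $\dim L^2(X,\nu)=1$ if and only if $\nu=\delta_x$ for some $x\in X$. The ``if'' direction is immediate. For ``only if'', note that $\dim L^2(X,\nu)=1$ forces every $\phi\in C(X)$ to be $\nu$-almost everywhere constant; if $\supp\nu$ contained two distinct points $p,q$, then Urysohn's Lemma would produce $\phi\in C(X)$ with $\phi(p)=0$ and $\phi(q)=1$, and such a $\phi$ cannot be a.e.\ constant because every neighborhood of a point of $\supp\nu$ has positive measure. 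Hence $\supp\nu$ is a singleton $\{x\}$ and $\nu=\delta_x$.

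Applying this lemma with $X=\widehat{\cA}$ and $\nu=\widehat{\mu}$ gives $\dim L^2_\mu(G)=1$ if and only if $\widehat{\mu}=\delta_x$ for some $x\in\widehat{\cA}$. Since $\widehat{\mu}$ is $G$-invariant (it corresponds to the $G$-invariant mean $\mu$ under the correspondence between invariant means on $\cA$ and $G$-invariant measures on $\widehat{\cA}$), the equality $\widehat{\mu}=\delta_x$ forces $\delta_x$ to be $G$-invariant, which says exactly that $g\cdot x=x$ for all $g\in G$, i.e.\ that $x$ is a $G$-invariant character. Conversely, if $x$ is a $G$-invariant character with $\widehat{\mu}=\delta_x$, then $L^2(\widehat{\cA},\delta_x)$ is plainly one-dimensional, completing the equivalence.

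For the decomposition, I would assume $\widehat{\mu}=\delta_x$ and compute, for $f\in\cA$,
\[
\langle f,f\rangle_\mu=\mu(|f|^2)=\int_{\widehat{\cA}}|\widehat{f}(y)|^2\,d\widehat{\mu}(y)=|\widehat{f}(x)|^2=|x(f)|^2,
\]
so that $\cK_\mu=\ker x$, a closed hyperplane in $\cA$. Since $x(\mathbf{1})=1$, we have $\mathbf{1}\notin\cK_\mu$, hence $\C\mathbf{1}\cap\cK_\mu=0$; and for any $f\in\cA$ the element $f-x(f)\mathbf{1}$ lies in $\ker x=\cK_\mu$, so $\cA=\C\mathbf{1}\oplus\cK_\mu$.

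The only genuinely non-formal point — and thus the main obstacle — is the measure-theoretic lemma that $L^2$ of a Radon probability measure on a compact Hausdorff space is one-dimensional precisely when the measure is a point mass; once that is in hand, the rest is bookkeeping with the Gelfand transform and the $G$-equivariant identifications already established. (One can, if desired, bypass the preceding theorem entirely by observing directly that $\mathbf{1}\notin\cK_\mu$ always, so $\dim L^2_\mu(G)=1$ is equivalent to $\cK_\mu$ having codimension one in $\cA$, i.e.\ to $\cA=\C\mathbf{1}\oplus\cK_\mu$; but the route through $L^2(\widehat{\cA},\widehat{\mu})$ makes the characterization in terms of $\widehat{\mu}$ transparent.)
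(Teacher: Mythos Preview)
Your proof is correct and follows precisely the route the paper intends: the corollary is stated in the paper without proof, immediately after the theorem giving the $G$-equivariant isomorphism $L^2_\mu(G)\cong_G L^2(\widehat{\cA},\widehat{\mu})$, so the implicit argument is exactly the one you supply---reduce to the measure-theoretic fact that $L^2$ of a Radon probability measure on a compact Hausdorff space is one-dimensional iff the measure is a point mass, then read off $G$-invariance and the hyperplane decomposition $\cA=\C\mathbf{1}\oplus\ker x$. Your write-up simply makes explicit what the paper leaves to the reader.
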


A group $G$ is said to be \textbf{extremely amenable} if there is a $G$-invariant character in $\widehat{\RUC(G)}$.  While it is known that no locally-compact groups are extremely amenable, it has been shown (see \cite{GP,GM}) that $\SO(\infty) = \cup_{n\in\N} \SO(n)$, $\SU(\infty) = \cup_{n\in\N} \SU(n)$, and the infinite symmetric group $S_\infty = \cup_{n\in\N} S_n$ are extremely amenable. One can also show that direct products of extremely amenable groups are again extremely amenable (see \cite[Theorem 449C]{Fremlin}).

 It is evident that if $G$ is an amenable group, then by Day's theorem, the closure of each $G$-orbit in $\widehat{\cA}$ gives rise to an invariant mean in $\widehat{\cA}$. Unfortunately, it is very difficult to study such orbits because of the unavoidable use of the axiom of choice in the construction of $\widehat{\cA}$. In the next section we look at a special case in which one may determine all of the invariant means on $\cA$ and say something about the decomposition of the representation $L^2_\mu(G)$.

\section{Almost Periodic Functions}

\noindent In general, there is no uniqueness property for invariant means similar to the uniqueness of Haar measures.  In fact, for many groups $G$ it has been shown that the set of all invariant means on $\RUC(G)$ has cardinality $2^{2^{|G|}}$ (see \cite{JPP}). However, one might hope for the existence of subalgebras $\cA$ of $\RUC(G)$ which possess a unique invariant mean $\mu \in \gM(\cA)$.  In particular, the algebra of  weakly almost periodic functions on a group always satisfies this property.  In some cases it is even possible to explicitly determine the value of this mean and write down an explicit decomposition of the unitary regular representation $L^2_\mu(G)$.

\begin{definition}
A continuous function $f\in C(G)$ is said to be \textbf{almost periodic} if the set $\{L_g f\}_{g\in G}$ is relatively compact in the norm topology of $C(G)$.  
We denote by $\AP(G)$ the space of almost periodic  functions on $G$.
\end{definition}

Von Neumann proved the following result about almost periodic functions:
\begin{theorem}(Von Neumann \cite{vN})
\label{vonNeumannTheorem}
If $f\in \AP(G)$, then the closed convex hull $\overline{\mathrm{co}}(\{L_g f\}_{g\in G})$ of the set of $G$-translates contains exactly one constant function.  We denote value of this constant function by $M(f)$.  Furthermore, $M$ is a $G$-invariant mean on $\AP(G)$.
\end{theorem}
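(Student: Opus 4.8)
The plan is to prove existence of a constant function in the closed convex hull first, and then uniqueness, and finally that the resulting functional $M$ is an invariant mean. For existence, fix $f \in \AP(G)$ and let $K = \overline{\mathrm{co}}(\{L_g f\}_{g \in G}) \subseteq C(G)$. Since $f$ is almost periodic, $\{L_g f\}_{g\in G}$ is relatively norm-compact, hence $K$ is a norm-compact convex subset of the Banach space $C(G)$ (the closed convex hull of a compact set in a Banach space is compact). The group $G$ acts on $K$ by the isometries $L_g$, and this action is affine; I would check it is continuous on $K$ in the norm topology (this follows because $g \mapsto L_g f$ is norm-continuous for $f \in \AP(G)$, using relative compactness to upgrade the orbit map, and left translations are isometries). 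By Day's Fixed Point Theorem (part (3), applied with $V = C(G)$ and $K$ our compact convex set), the action of $G$ on $K$ has a fixed point $h \in K$. A function $h \in C(G)$ fixed by every $L_g$ satisfies $h(g^{-1}x) = h(x)$ for all $g,x$, so $h$ is constant. Thus $K$ contains at least one constant function, and we set $M(f)$ equal to its value.

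For uniqueness, suppose $c_1 \mathbf{1}$ and $c_2 \mathbf{1}$ both lie in $K$. The key observation is that any constant function in $K$ must have value with real and imaginary parts pinched between the infimum and supremum of the corresponding parts of $f$ — more usefully, one shows that if $\phi$ is any continuous linear functional on $C(G)$ that is $G$-invariant (i.e. $\phi \circ L_g = \phi$), then $\phi$ takes the same value on every element of $\{L_g f\}_{g\in G}$, hence (by linearity and continuity) on all of $K$; in particular $\phi(c_1\mathbf{1}) = \phi(c_2\mathbf{1})$. But taking $\phi = \delta_e$... no — $\delta_e$ is not invariant. Instead the cleanest route: take any mean $\nu$ on $C(G)$ that is $G$-invariant when restricted to $\AP(G)$ — but that is what we are trying to build, so this is circular. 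The honest approach to uniqueness is to use a metric/averaging argument: for any $h_1, h_2 \in K$ and any $\epsilon>0$, a standard Day-type averaging shows one can find a single convex combination of translates approximating both simultaneously in norm, forcing $\|c_1 \mathbf 1 - c_2 \mathbf 1\| = |c_1 - c_2|$ to be arbitrarily small. Concretely: iterate the contraction built from averaging translates over finite subsets of $G$, exploiting that on the compact set $K$ the only $G$-fixed points are constants and the "oscillation" functional $h \mapsto \mathrm{diam}(\overline{\mathrm{co}}\{L_g h\})$ is strictly decreased by averaging unless $h$ is already constant.

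Once existence and uniqueness are in hand, defining $M: \AP(G) \to \C$ by sending $f$ to the unique constant in $K$, I would verify $M$ is a $G$-invariant mean. Invariance is immediate: $\overline{\mathrm{co}}\{L_g(L_a f)\} = \overline{\mathrm{co}}\{L_g f\}$ since left translation permutes the orbit, so $M(L_a f) = M(f)$. Positivity: if $f \geq 0$, every $L_g f \geq 0$, hence every element of the convex hull is $\geq 0$, hence the norm-limit constant is $\geq 0$, so $M(f) \geq 0$. Normalization: $L_g \mathbf 1 = \mathbf 1$, so $M(\mathbf 1) = 1$. Linearity is the one point requiring real care — it is not obvious that the "unique constant in the closed convex hull of translates" operation is additive. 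The standard fix, which I would follow, is to first establish $M(f) = \nu(f)$ for \emph{every} $G$-invariant mean $\nu$ on $\AP(G)$ (using the argument from uniqueness: such a $\nu$ is constant on $K$, so $\nu(f) = \nu(c\mathbf 1) = c = M(f)$), and separately to produce at least one $G$-invariant mean on $\AP(G)$ by restricting an invariant mean on $\RUC(G)$ (which exists by Theorem on direct limits / amenability only if $G$ is amenable — but $\AP(G)$ always carries an invariant mean directly, since $G$ acts on the compact group-like hull and one pulls back Haar measure; alternatively restrict any mean on $\RUC(G)$ obtained via Day after noting $\AP(G) \subseteq \RUC(G)$ and $G$ acts relatively compactly there too). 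Then $M$ inherits linearity from $\nu$.

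The main obstacle is the uniqueness/linearity step: establishing that the closed convex hull of translates contains \emph{only one} constant, and equivalently that $M$ is determined by (hence equal to) any invariant mean, requires the nontrivial Day-style iterated-averaging estimate rather than a soft compactness argument. Everything else — compactness of $K$, existence of the fixed point via Day's theorem, and the mean axioms — is routine.
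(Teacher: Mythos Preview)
The paper does not actually give a proof of this theorem; it is stated with a citation to von Neumann's original 1934 paper and then used. So there is no ``paper's own proof'' to compare against, and your proposal has to stand on its own.

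The decisive gap is your use of Day's Fixed Point Theorem for existence. Day's theorem is an \emph{equivalence}: $G$ has the fixed-point property for continuous affine actions on compact convex sets if and only if $G$ is amenable. Von Neumann's theorem, however, holds for \emph{every} topological group, amenable or not (e.g.\ the free group $F_2$, which is residually finite and hence has a large algebra $\AP(F_2)$, yet is not amenable). So invoking Day here either assumes the conclusion is vacuous on non-amenable groups (false) or assumes amenability (not given). What rescues the fixed-point approach is that the action of $G$ on the norm-compact convex set $K$ is by \emph{isometries}, hence equicontinuous; the correct tool is Kakutani's fixed-point theorem for equicontinuous affine actions (or Ryll--Nardzewski), neither of which requires amenability. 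Alternatively, von Neumann's original argument bypasses fixed-point theorems entirely and builds the constant by iterated finite averaging---exactly the ``Day-type averaging'' you sketch for uniqueness, but carried through for existence as well.

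Your linearity argument has a related circularity. You propose to show $M(f)=\nu(f)$ for any invariant mean $\nu$ on $\AP(G)$ and then exhibit one such $\nu$; but the candidates you offer are (i) restriction from $\RUC(G)$, which again needs $G$ amenable, or (ii) pullback of Haar measure from the Bohr compactification $G_c$, which in the paper's development is constructed \emph{after} this theorem (and whose group structure typically relies on facts equivalent to what you are proving). Von Neumann's direct approach proves additivity of $M$ from the averaging construction itself: given $f_1,f_2\in\AP(G)$, one shows that a single convex combination of translates can be chosen to drive both $f_1$ and $f_2$ (and hence $f_1+f_2$) close to their respective constants simultaneously. You gesture at this idea in your uniqueness paragraph; it is in fact the engine of the whole proof.
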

By continuity arguments, one sees that any invariant mean on $\AP(G)$ must take the same value for every function in $\overline{\mathrm{co}}(\{L_g f\}_{g\in G})$.  Because this set contains a unique constant function, one sees that for any invariant mean $\mu\in\gM(f)$, one has $\mu(f) = M(f)$.  In fact, it is also  not difficult to see that every invariant mean $\lambda$ on $\RUC(G)$ must have the property that $\lambda|_{\AP(G)} = M$.

In fact, it is possible to put a topological group structure on $\widehat{\AP(G)}$ so that $i_{\AP(G)}:G\rightarrow \widehat{\AP(G)}$ is a continuous homomorphism: 

\begin{theorem}(\cite[p. 166, 168]{Loomis})
$\AP(G)$ is a $G$-invariant closed $C^*$-subalge\-bra of $\RUC(G)$. Furthermore, the product defined by $\delta_{g}\cdot \delta_{h} = \delta_{gh}$ on $i_\AP (G)$ extends to a compact topological group structure on $\widehat{\AP(G)}$. 
\end{theorem}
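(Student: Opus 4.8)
The plan is to dispatch the algebraic and topological assertions first, and then to realize $\widehat{\AP(G)}$ as the enveloping group of the left-translation semigroup acting on $\AP(G)$.

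The algebraic claims are routine. The constant functions are almost periodic, so $\AP(G)$ is unital; complex conjugation is a norm-isometry of $C_b(G)$, and addition and multiplication are continuous on norm-bounded subsets of $C_b(G)$, so from the compactness of $\overline{\{L_g f\}}$, $\overline{\{L_g f_1\}}$, $\overline{\{L_g f_2\}}$ one deduces the same for $\overline{\{L_g \bar f\}}$, $\overline{\{L_g(f_1+f_2)\}}$, $\overline{\{L_g(f_1 f_2)\}}$ (each of the latter orbits lies in the image of a compact set under one of these maps). Hence $\AP(G)$ is a unital commutative $*$-subalgebra of $C_b(G)$. It is norm-closed: if $f_k\to f$ uniformly with $f_k\in\AP(G)$, then since $\|L_g f-L_g f_k\|_\infty=\|f-f_k\|_\infty$, a finite $\varepsilon$-net for the totally bounded set $\{L_g f_k\}$ yields one for $\{L_g f\}$. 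It is left-translation invariant because $\{L_g(L_h f)\}\subseteq\{L_g f\}$, and it is contained in $\RUC(G)$: if $g_i\to e$ then $L_{g_i}f\to f$ pointwise (continuity of $f$), so every subnet of $(L_{g_i}f)$ has a norm-convergent subnet whose limit must be $f$, whence $\|L_{g_i}f-f\|_\infty\to 0$. I would also record the easy fact that each right translation $R_h$ is a $*$-isometry of $C_b(G)$ commuting with every $L_g$, so $\{L_g(R_h f)\}=R_h\{L_g f\}$ is relatively compact and $R_h$ maps $\AP(G)$ into itself.

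For the group structure, write $bG:=\widehat{\AP(G)}$, a compact Hausdorff space, in which $i_\AP(G)=\{\delta_g:g\in G\}$ is dense by Lemma~\ref{characterDensity}. I would introduce $\mathcal S$, the closure of $\{L_g:g\in G\}$ inside $\prod_{f\in\AP(G)}\overline{\{L_g f\}}$ under the product of the norm topologies (equivalently, the strong-operator closure of $\{L_g\}$ among the isometric operators on $\AP(G)$). By almost periodicity each factor $\overline{\{L_g f\}}$ is norm-compact, so Tychonoff's theorem makes $\mathcal S$ compact; and since ``linear unital $*$-endomorphism'' and ``isometry'' are conditions preserved under these limits (using continuity of addition and of multiplication on the relevant bounded sets), every element of $\mathcal S$ is a $*$-automorphism of $\AP(G)$. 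A short approximation argument, using that each $L_g$ is an isometry and hence continuous, shows $\mathcal S$ is closed under composition; joint continuity of composition on $\mathcal S\times\mathcal S$ follows from $\|S_iT_i f-STf\|_\infty\le\|T_i f-Tf\|_\infty+\|(S_i-S)Tf\|_\infty$; and for $S=\lim L_{g_i}\in\mathcal S$, passing to a subnet along which $(L_{g_i^{-1}})$ also converges, say to $T\in\mathcal S$, gives $ST=TS=\mathrm{Id}$, so $\mathcal S$ is a group, in which continuity of inversion then follows from compactness. Thus $\mathcal S$ is a compact topological group with $L_gL_h=L_{gh}$ and dense subgroup $\{L_g\}$.

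Next I would let $\mathcal S$ act on $bG$ by $S\cdot x=x\circ S^{-1}$ — a continuous action by homeomorphisms, for which $L_h\cdot\delta_g=\delta_{hg}$ — and study the orbit map $\Psi:\mathcal S\to bG$, $\Psi(S)=S\cdot\delta_e$. Then $\Psi(L_h)=\delta_h$, so $\Psi$ has compact image containing the dense set $i_\AP(G)$ and is therefore surjective. For injectivity, if $\Psi(S)=\Psi(S')$ put $U={S'}^{-1}S\in\mathcal S$; then $(Uf)(e)=f(e)$ for all $f\in\AP(G)$, and writing $U=\lim_i L_{g_i}$ this says exactly that $\delta_{g_i^{-1}}\to\delta_e$ in $bG$, so for every $g\in G$ one gets $(Uf)(g)=\lim_i f(g_i^{-1}g)=\lim_i\widehat{R_g f}(\delta_{g_i^{-1}})=\widehat{R_g f}(\delta_e)=f(g)$, using that $R_g f\in\AP(G)$ has a continuous Gelfand transform; hence $U=\mathrm{Id}$ and $S=S'$. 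A continuous bijection from a compact space onto a Hausdorff space is a homeomorphism, so $\Psi$ transports the compact topological group structure of $\mathcal S$ to $bG=\widehat{\AP(G)}$; under it $\delta_g\cdot\delta_h=\Psi(L_gL_h)=\Psi(L_{gh})=\delta_{gh}$, so the product on $\widehat{\AP(G)}$ extends the prescribed product on $i_\AP(G)$, and $i_\AP$ is a group homomorphism, continuous because $\AP(G)\subseteq C(G)$ (by the earlier lemma that $i_\cA$ is continuous whenever $\cA\subseteq C(G)$).

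The main obstacle is the compactness of $\mathcal S$: this is precisely the point at which almost periodicity — the norm-relative-compactness of the orbits $\{L_g f\}$ — is used in an essential way, via Tychonoff, and everything afterward is a matter of checking that the inherited structure is well behaved, the delicate secondary points being joint continuity of composition and continuity of inversion on $\mathcal S$, both handled by the isometry property together with compactness. An alternative would be to invoke the classical equivalence of left- and right-almost-periodicity and define the product on $\widehat{\AP(G)}$ directly, verifying continuity by hand as in \cite{Loomis}; but the enveloping-semigroup route keeps the bookkeeping symmetric and relies only on the left-translation definition adopted here.
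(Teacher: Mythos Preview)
The paper does not give its own proof of this theorem; it is stated with a bare citation to \cite{Loomis}. Your argument is correct and is essentially the classical construction of the Bohr compactification as the enveloping group of the left-translation operators on $\AP(G)$, which is the approach taken in Loomis (Loomis works directly with the strong-operator closure of the translations and then identifies it with the character space, so your route via the auxiliary group $\mathcal S$ and the orbit map $\Psi$ is a mild repackaging rather than a genuinely different method).

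Two minor expository points, neither of which affects validity. First, you call the elements of $\mathcal S$ ``$*$-automorphisms of $\AP(G)$'' before you have established invertibility; at that stage the limit argument only gives isometric unital $*$-endomorphisms, and surjectivity is exactly what your subsequent inverse construction (passing to a convergent subnet of $L_{g_i^{-1}}$) supplies. Second, in the injectivity step for $\Psi$ you write $U=\lim_i L_{g_i}$ and deduce $\delta_{g_i^{-1}}\to\delta_e$; it is worth making explicit that this is net convergence in the strong-operator topology (so that $(L_{g_i}f)(e)\to(Uf)(e)$ follows from norm convergence $L_{g_i}f\to Uf$), since $\mathcal S$ need not be metrizable.
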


 For the sake of clarity, we introduce the notation $G_c \equiv \widehat{\AP(G)}$. 
Note that the Gelfand isomorphism $\verb!^!: \AP(G)\rightarrow C(G_c)$ sets up a correspondence between almost periodic functions on $G$ and continuous functions on $G_c$.  Furthermore, one sees that the invariant mean on $\AP(G)$ corresponds to the Haar measure on $G_c$.  That is, $M(f) = \int_{G_c} \widehat{f}(x) dx$ for all $f\in \AP(G)$.

 In the notation of Section~\ref{unitarySection}, we see that $\cK_M =0$ because $\int_{G_c}\widehat{f}(x) dx>0$ whenever $f\in \AP(G)$ and $f>0$. In particular, the Gelfand transform extends to a unitary $G$-intertwining operator $\verb!^!: L^2_M(G) \rightarrow L^2(G_c)$, where $L^2(G_c)$ is defined using the Haar measure on $G_c$.  Finally, because $G_{\AP(G)}$ is a dense subgroup of $G_c=\widehat{\AP(G)}$, it follows that a subspace of $L^2(G_c)$ is $G$-invariant if and only if it is $G_c$-invariant. 
 
 The following result now follows immediately from the Peter-Weyl theorem:  
 \begin{theorem}
The representation  $L^2(G_c)$ decomposes into a direct sum of finite-dimensional representations of $G$. 
\end{theorem}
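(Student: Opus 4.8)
The plan is to invoke the Peter--Weyl theorem for the compact group $G_c = \widehat{\AP(G)}$ and then transport the resulting decomposition back to $G$ along the dense homomorphism $i_{\AP(G)}\colon G\rightarrow G_c$.

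First I would apply Peter--Weyl to $G_c$. Since $G_c$ is a compact Hausdorff topological group, every irreducible unitary representation of $G_c$ is finite-dimensional, and the left-regular representation of $G_c$ on $L^2(G_c)$ (with respect to normalized Haar measure) decomposes as the orthogonal Hilbert-space direct sum
\[
L^2(G_c) \cong \bigoplus_{\pi\in\widehat{G_c}} (\dim\pi)\cdot\pi ,
\]
where $\widehat{G_c}$ is the set of equivalence classes of irreducible unitary representations of $G_c$. Equivalently, $L^2(G_c)$ is the orthogonal direct sum of its $G_c$-isotypic components, each of which is finite-dimensional (of dimension $(\dim\pi)^2$). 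Concretely, these isotypic components are spanned by the matrix coefficients of the $\pi$, which are exactly the continuous functions on $G_c$ and so correspond under the Gelfand isomorphism $\verb!^!\colon\AP(G)\rightarrow C(G_c)$ to almost periodic functions on $G$.

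Next I would transfer this to $G$. The map $i_{\AP(G)}$ is a continuous homomorphism with $G_{\AP(G)}$ dense in $G_c$, so restriction along it turns every continuous unitary representation of $G_c$ into a continuous unitary representation of $G$; in particular the $G_c$-action on $L^2(G_c)$ restricts to the $G$-action under consideration. As already observed just before the statement of the theorem, a closed subspace of $L^2(G_c)$ is $G$-invariant if and only if it is $G_c$-invariant, since $G$ is dense in $G_c$ and the representation is strongly continuous. Hence each finite-dimensional $G_c$-isotypic component is a $G$-invariant subspace, and together these subspaces furnish the claimed orthogonal decomposition of $L^2(G_c)$ into finite-dimensional representations of $G$ (which one may, if desired, decompose further into $G$-irreducibles).

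The only point requiring a word of care is the precise meaning of ``direct sum'': it is a Hilbert-space (orthogonal) direct sum, i.e.\ the closure of the algebraic direct sum of the finite-dimensional pieces, which is the most one can hope for when $G_c$ is infinite. Beyond this, the argument is purely formal once one has the density of $G_{\AP(G)}$ in $G_c$ and the resulting equivalence of $G$- and $G_c$-invariance, both of which are in place above; there is no substantive obstacle, as the entire content of the theorem is carried by the Peter--Weyl theorem for the compact group $G_c$.
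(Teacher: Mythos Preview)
Your proposal is correct and follows exactly the approach indicated in the paper: the paper simply remarks that the result ``follows immediately from the Peter--Weyl theorem'' after having established that $G_{\AP(G)}$ is dense in $G_c$ and hence that $G$-invariance and $G_c$-invariance of subspaces of $L^2(G_c)$ coincide. You have spelled out precisely these details.
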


\begin{corollary}
If $G$ possesses no nontrivial finite-dimensional unitary representations, then $\AP(G)$ contains only constant functions on $G$.
\end{corollary}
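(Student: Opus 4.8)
The plan is to push everything through the Bohr compactification $G_c = \widehat{\AP(G)}$ and invoke the Peter--Weyl theorem for the compact group $G_c$, in the same spirit as the theorem immediately preceding this corollary. Recall from the excerpt that the Gelfand transform is a $*$-isomorphism $\AP(G)\to C(G_c)$, that $G_c$ carries a compact topological group structure, and that $i_{\AP(G)}\colon G\to G_c$ is a continuous homomorphism with dense image (the density being Lemma~\ref{characterDensity}). Thus it suffices to show that, under the hypothesis, $C(G_c)=\C\mathbf{1}$, i.e.\ that $G_c$ is the trivial group.

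First I would recall from the Peter--Weyl theorem, applied to the compact group $G_c$, that the linear span of the matrix coefficients of the (necessarily finite-dimensional) irreducible unitary representations of $G_c$ is dense in $C(G_c)$. Hence it is enough to prove that every such representation $\sigma$ of $G_c$ is trivial: for then every matrix coefficient of an irreducible representation of $G_c$ is a constant function, and density forces $C(G_c)=\C\mathbf{1}$, whence $\AP(G)=\C\mathbf{1}$ via the Gelfand isomorphism.

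To see that $\sigma$ is trivial, compose it with the continuous homomorphism $i_{\AP(G)}$ to obtain $\sigma\circ i_{\AP(G)}$, a continuous finite-dimensional unitary representation of $G$. Since a finite-dimensional unitary representation is completely reducible, $\sigma\circ i_{\AP(G)}$ decomposes as a direct sum of irreducible finite-dimensional unitary representations of $G$; by hypothesis each summand is trivial, so $\sigma(i_{\AP(G)}(g))=\mathrm{Id}$ for all $g\in G$. Because $i_{\AP(G)}(G)$ is dense in $G_c$ and $\sigma$ is continuous, $\sigma(x)=\mathrm{Id}$ for all $x\in G_c$; as $\sigma$ was irreducible, $\dim\sigma=1$ and $\sigma$ is the trivial representation, as desired.

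The only slightly delicate step is the passage between representations of $G_c$ and representations of $G$: one must note that $\sigma\circ i_{\AP(G)}$ is genuinely a continuous unitary representation of $G$ (continuity being inherited from $i_{\AP(G)}$, already shown to be a continuous homomorphism onto a dense subgroup), and one must combine complete reducibility of finite-dimensional unitary representations with the density of $i_{\AP(G)}(G)$ to lift triviality on $G$ to triviality on $G_c$. Alternatively, one can argue on the $L^2$-side: by the preceding theorem $L^2_M(G)\cong L^2(G_c)$ decomposes into finite-dimensional $G_c$-subrepresentations, each of which restricts to a trivial $G$-representation and hence (by density) is a trivial $G_c$-representation; since the trivial representation occurs in the regular representation $L^2(G_c)$ with multiplicity one, $\dim L^2_M(G)=1$, and since $\cK_M=0$ the space $\AP(G)$ embeds into this one-dimensional space and therefore consists of constants only.
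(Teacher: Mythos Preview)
Your argument is correct and aligned with the paper's intent: the paper states the corollary without proof, as an immediate consequence of the preceding Peter--Weyl decomposition of $L^2(G_c)\cong L^2_M(G)$ into finite-dimensional $G$-representations, which is exactly your ``alternative'' argument at the end. Your primary argument via matrix coefficients in $C(G_c)$ is an equally valid minor variant of the same Peter--Weyl reasoning, simply carried out in $C(G_c)$ rather than $L^2(G_c)$.
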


For instance, it follows that $SL(n,\R)$ and $\SU(\infty) = \cup_{n\in\N} \SU(n)$ have no nontrivial almost-periodic functions, so the decomposition theory for almost periodic functions provides no information for such groups.

The situation is much more interesting if $G$ is abelian.  Denote by $\widehat{G}$ the group of all continuous characters of $G$. Note that any character $\chi\in\widehat{G}$ is almost periodic; in fact, $L_g\chi = \chi(g^{-1})\chi$, so that $\{L_g \chi\}_g\in G$ is a compact subset of a one-dimensional vector space.  Thus, every character $\chi\in \widehat{G}$ corresponds to a continuous function $\widehat{\chi}$ on $G_c = \widehat{\AP(G)}$.  Because $\widehat{\chi}$ is a character on the dense subgroup $G_{\AP(G)}$, it follows that $\widehat{\chi}$ is in fact a continuous character of $G_c$.  In other words, we have shown that:
\begin{theorem}
The Gelfand transform restricts to an continuous surjective group homomorphism $\verb!^!:\widehat{G} \rightarrow \widehat{G}_c$.
\end{theorem}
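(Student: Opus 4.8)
The plan is to break the statement into its three ingredients---that the Gelfand transform genuinely restricts to a map $\widehat G\to\widehat{G}_c$, that this restriction is a continuous group homomorphism, and that it is surjective---and to note that the first two are essentially formal once the Gelfand picture is in place, so that the only real content is surjectivity.

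First I would record well-definedness and the homomorphism property. As already observed, every $\chi\in\widehat G$ is almost periodic, since $\{L_g\chi\}_{g\in G}=\{\chi(g^{-1})\chi\}_{g\in G}$ lies in the one-dimensional space $\C\chi$ and is bounded, hence relatively compact; thus $\widehat\chi\in C(G_c)$ is defined, and since it restricts to a character on the dense subgroup $G_{\AP(G)}\subseteq G_c$ it is a continuous character of $G_c$, i.e.\ $\widehat\chi\in\widehat{G}_c$. The homomorphism property is then immediate from the fact that the Gelfand transform $\cA\to C(\widehat\cA)$ is an algebra homomorphism: for $\chi_1,\chi_2\in\widehat G$ the character $\chi_1\chi_2$ is the pointwise product of the two functions on $G$, so $\widehat{\chi_1\chi_2}=\widehat{\chi_1}\,\widehat{\chi_2}$, and $\widehat{\mathbf 1}=\mathbf 1$; since products of characters of $G_c$ are again pointwise products, the restricted map sends products to products. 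For continuity I would give $\widehat G$ and $\widehat{G}_c$ the topologies they inherit as subsets of $\AP(G)$ and $C(G_c)$ (so, uniform convergence on $G$ and on $G_c$ respectively); the Gelfand transform of the commutative $C^*$-algebra $\AP(G)$ is an isometric isomorphism onto $C(G_c)$, so the restriction is an isometry, hence continuous.

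The substantive step is surjectivity: every continuous character of $G_c$ must arise as $\widehat\chi$ for some $\chi\in\widehat G$ --- equivalently, every continuous character of $G$ extends continuously to $G_c$. Given $\psi\in\widehat{G}_c$, I would set $\chi:=\psi\circ i_{\AP(G)}$. Since $i_{\AP(G)}:G\to G_c$ is a continuous group homomorphism (as recorded above, using $\AP(G)\subseteq C(G)$ and the construction of the group structure on $G_c$) and $\psi:G_c\to\T$ is a continuous homomorphism, $\chi$ is a continuous character of $G$. It then remains only to identify $\widehat\chi$ with $\psi$: both are continuous functions on $G_c$, and on the dense subset $G_{\AP(G)}$ one has $\widehat\chi(\delta_g)=\delta_g(\chi)=\chi(g)=\psi\big(i_{\AP(G)}(g)\big)=\psi(\delta_g)$, so by density of $G_{\AP(G)}$ in $G_c$ (Lemma~\ref{characterDensity}) the two agree everywhere. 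Hence $\psi=\widehat\chi$ lies in the image.

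I do not anticipate a genuine obstacle. The two points needing a little care are (i) fixing the topologies on $\widehat G$ and $\widehat{G}_c$ so that the word ``continuous'' is literally correct --- the uniform topologies coming from the ambient $C^*$-algebras make the map an isometry, which is the convenient choice --- and (ii) the repeated appeals to the density of $G_{\AP(G)}$ in $G_c$, used both to see that $\widehat\chi$ is multiplicative on all of $G_c$ and to recover $\widehat\chi$ from its restriction to $G$. I would also remark, although it is not part of the assertion, that the same density argument shows the map is injective, so that it is in fact a bijective homomorphism and the Gelfand transform identifies $\widehat G$ with $\widehat{G}_c$.
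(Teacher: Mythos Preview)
Your proposal is correct and aligns with the paper's approach. The paper's argument, which appears in the paragraph immediately preceding the theorem, is really only the well-definedness step you reproduce: each $\chi\in\widehat G$ is almost periodic because $\{L_g\chi\}_{g\in G}\subseteq\C\chi$, and then density of $G_{\AP(G)}$ in $G_c$ promotes $\widehat\chi$ from a character on the dense subgroup to a continuous character of all of $G_c$. The paper then simply asserts the theorem, leaving the homomorphism property, continuity, and surjectivity implicit. Your proof fills in exactly these gaps, and your surjectivity argument via $\chi:=\psi\circ i_{\AP(G)}$ together with density is the natural one the paper presumably has in mind.

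One small remark: your choice to topologize $\widehat G$ and $\widehat{G}_c$ by the ambient sup norms is reasonable and makes continuity immediate, but the paper never commits to a topology on $\widehat G$, so you are making a choice here rather than verifying a stated claim. Also, your closing observation that the map is always injective is correct (and follows, as you say, from density); note that this sits a bit awkwardly with the sentence following the theorem in the paper, which seems to tie bijectivity to the hypothesis that characters separate points---but that is the paper's issue, not yours.
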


In fact, one has that $\verb!^!$ is also an isomorphism of abstract groups as long as the characters of $G$ separate points on $G$.  This is true for locally compact abelian groups and, as we shall see in the next section, for direct or inverse limits of locally compact abelian groups.
 
\section{Direct Limits of Abelian Groups}
\noindent The famous Pontryagin Duality Theorem asserts that for any locally compact abelian group $G$, there is a canonical topological group isomorphism between $G$ and $\widehat{\widehat{G}}$ given by identifying $g\in G$ with the character $\widehat{g}$ given by $\widehat{g}(\chi) = \chi(g)$ for all $\chi\in \widehat{G}$.  In fact, it is possible to extend this result to the case of direct limits  as we now show.

Suppose that $G=\varinjlim G_n = \cup_{n\in\N} G_n$ is a strict direct limit of locally compact abelian groups.  There are natural continuous projections $p_n:\widehat{G}_{n+1} \rightarrow \widehat{G}_n$ given by $p_n(\chi) = \chi|_{G_n}$ for each character $\chi\in \widehat{G}_n$.  In fact, it is clear that $p_n$ is a homomorphism, and one can show that it is surjective.  We may thus construct a projective-limit group $\varprojlim \widehat{G}_n$.  One then proves the following theorem:

\begin{theorem}(\cite[p. 45]{Sa})
\label{PontryaginDuality}
 The group $G=\varinjlim G_n$ satisfies the Pontryagin duality.  In fact, $  \widehat{\varinjlim G_n} = \varprojlim \widehat{G}_n$ and  $\widehat{\varprojlim  \widehat{G}_n} = \varinjlim G_n$.   
\end{theorem}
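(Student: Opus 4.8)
The plan is to prove the two duality statements for strict direct limits of locally compact abelian (LCA) groups by carefully tracking what the topological duality functor does to direct and inverse systems. First I would fix notation: write $G = \varinjlim G_n$ with the inclusion maps $\iota_{m,n}: G_n \hookrightarrow G_m$ for $n \le m$, equipped with the direct-limit topology (so $U \subseteq G$ is open iff $U \cap G_n$ is open in each $G_n$; since the limit is strict, each $G_n$ is a closed topological subgroup of $G$). Dually, set $H_n = \widehat{G}_n$ with the compact-open topology, and let $p_{n,m}: H_m \to H_n$ be the restriction maps $\chi \mapsto \chi|_{G_n}$ for $n \le m$; I would first record the two preliminary facts asserted in the excerpt, namely that each $p_{n,m}$ is a continuous group homomorphism and that it is \emph{surjective}. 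Surjectivity is the one nontrivial input: a character of the closed subgroup $G_n$ of the LCA group $G_m$ extends to a character of $G_m$ by Pontryagin duality for LCA groups (equivalently, annihilators and the exactness of the dual functor on LCA groups), so this reduces to a standard citation. With these in hand, $\varprojlim H_n$ is a well-defined projective-limit topological group; since it is an inverse limit of (LCA) groups it is itself a topological group, and one notes it is LCA precisely when the bonding maps are eventually ``nice enough,'' but for the duality statement we only need it as a topological group.

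The first half, $\widehat{\varinjlim G_n} = \varprojlim \widehat{G}_n$, I would prove by exhibiting the natural map and checking it is a topological isomorphism. A character $\chi \in \widehat{G}$ restricts to a compatible family $(\chi|_{G_n})_n \in \varprojlim H_n$, giving a map $\Phi: \widehat{G} \to \varprojlim \widehat{G}_n$; conversely a compatible family of characters $(\chi_n)$ with $\chi_m|_{G_n} = \chi_n$ defines a single function on $G = \bigcup G_n$ which is a homomorphism, and it is continuous because its restriction to each $G_n$ is the continuous character $\chi_n$ and $G$ carries the direct-limit topology — so $\Phi$ is a bijection. For the topology: the compact-open topology on $\widehat{G}$ has a neighborhood basis at the identity given by $(K, V)$-sets with $K \subseteq G$ compact; and here I would use the standard fact that in a strict (hence ``regular'' in the sense that compacts don't escape) direct limit of LCA groups every compact subset of $G$ is contained in some $G_n$ — this is where strictness is essential. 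Hence a compact-open neighborhood of $1$ in $\widehat{G}$ is controlled by a compact subset of a single $G_n$, which matches exactly the neighborhood basis of $\varprojlim H_n$ coming from the compact-open topologies on the $H_n$'s. So $\Phi$ and $\Phi^{-1}$ are both continuous.

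For the second half, $\widehat{\varprojlim \widehat{G}_n} = \varinjlim G_n$, I would apply classical Pontryagin duality $\widehat{\widehat{G}_n} \cong G_n$ levelwise and dualize the first half: the dual functor is contravariant and is known to convert (strict) direct limits of LCA groups into inverse limits and vice versa — indeed, combining $\widehat{\varinjlim G_n} = \varprojlim \widehat{G}_n$ with the levelwise isomorphisms $G_n \cong \widehat{H_n}$ gives $\varinjlim G_n \cong \varinjlim \widehat{H_n} \cong \widehat{\varprojlim H_n}$, where the last step is the ``dual'' statement that the dual of an inverse limit of LCA groups is the direct limit of the duals. Alternatively, and perhaps cleaner for the write-up, I would simply verify directly that the canonical evaluation map $G \to \widehat{\widehat{G}}$ is a topological isomorphism by feeding in the identification of $\widehat{G}$ with $\varprojlim \widehat{G}_n$ from part one and then computing $\widehat{\varprojlim \widehat{G}_n}$ as a direct limit, using that characters of $G$ separate points on $G$ (each $G_n$ is LCA so its characters separate points, and these extend). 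The single genuine obstacle in all of this is the compact-containment lemma — that every compact subset of a strict direct limit of LCA groups lies in some finite stage — together with the dual surjectivity of the restriction maps; both are where local compactness and strictness of the limit are genuinely used, and everything else is a formal manipulation of universal properties of limits and the compact-open topology. Accordingly I would either cite these two facts (they are standard; see the reference \cite{Sa} given in the statement) or include short proofs of them as lemmas before the main argument.
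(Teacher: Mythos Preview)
The paper gives no proof of this theorem at all; it is stated with a bare citation to \cite[p.~45]{Sa} and then used as a black box, so there is no argument in the paper to compare your proposal against.

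Your outline for the first identity $\widehat{\varinjlim G_n}\cong\varprojlim\widehat{G}_n$ is the standard one and is sound: the bijection via restriction/patching of characters, together with the compact-containment lemma for strict direct limits to match the compact-open topologies, is exactly how this is done. One caution on the second identity: your first suggested route invokes ``the dual of an inverse limit of LCA groups is the direct limit of the duals,'' but $\varprojlim\widehat{G}_n$ need not be locally compact, and in any case that statement is not an instance of the first half you just proved (which dualized a \emph{direct} limit), so you cannot simply cite it as already established --- doing so is essentially assuming what you want to show. Your alternative, verifying directly that the evaluation map $G\to\widehat{\widehat{G}}$ is a topological isomorphism once $\widehat{G}$ has been identified with $\varprojlim\widehat{G}_n$, is the correct way to finish; it requires an honest argument (e.g., showing every continuous character of $\varprojlim\widehat{G}_n$ factors through some finite-stage projection, hence comes from an element of some $G_n\cong\widehat{\widehat{G}_n}$), and should be written out rather than merely gestured at.
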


In particular, we see that $\widehat{G}$ separates points on the direct-limit group $G=\varinjlim G_n$ and thus that the groups $\widehat{G}$ and $\widehat{G}_c$ are isomorphic as abstract groups. Thus $L^2_M(G)$ decomposes into a direct sum of irreducible representations as follows:
\[
L^2_M(G) \cong \bigoplus_{\chi\in \widehat{G}} \C \chi,
\]
where $\C\chi$ is the one-dimensional subspace of $L^2_M(G)$ generated by the character $\chi\in\widehat{G}$.  Note that $\widehat{G}$ may contain an uncountable number of characters on $G$, in which case $L^2_M(G)$ is an nonseparable Hilbert space.

We end this section with an example.  Consider the Torus group $\T=S^1$.  For each $n$, consider $n^\text{th}$ Cartesian power $\T^n$.  By using the embedding $\T^n\rightarrow \T^{n+1}$ given by  $z\mapsto (z,1)$, one can construct the direct-limit group $\T^\infty = \varinjlim \T^n$.  We recall that $\widehat{T} = \Z$ and that $\widehat{\T^n} = \Z^n$.  Hence, Theorem~\ref{PontryaginDuality} implies that $\widehat{T^\infty} = \varprojlim \Z^n$, where the projections $\Z^{n+1}\rightarrow \Z^n$ are the canonical projections given by $(z,m)\mapsto z$ for all $m\in\Z$ and $z\in \Z^n$. Thus $\widehat{\T^\infty}$ is isomorphic to the the group $\Z^\N$ of all sequences of integers.  Thus,
\[
L^2_M(\T^\infty) \cong \bigoplus_{\sigma\in \Z^\N} \C\chi_\sigma, 
\]
where $\chi_\sigma \in \widehat{\T^\infty}$ is the character corresponding to $\sigma\in\Z^n$.  In particular, $L^2_M(\T^\infty)$ is far from being separable.

Because $\T^\infty$ is abelian, there exist invariant means on $\RUC(\T^\infty)$.  For any such invariant mean $\mu$, one can construct the corresponding regular representation $L^2_\mu(\T^\infty)$.  One has that $L^2_M(\T^\infty)$ is a subrepresentation of $L^2_\mu(\T^\infty)$ for any mean $\mu$.  However, due to the necessity of applying the axiom of choice to construct such an invariant mean, it is not clear whether or not it is possible to say much about the orthogonal complement $L^2_\mu(\T^\infty)\ominus L^2_M(\T^\infty)$ for a mean $\mu$.

\section{Spherical Functions and Direct Limits}
\noindent
 In this section we see how invariant means may be used to describe the behavior of spherical functions on direct limits of Gelfand pairs.  In the classical theory, spherical functions are critically important in studying harmonic analysis on Gelfand pairs.  
 
  We remind the reader that a \textbf{Gelfand pair} is a pair $(G,K)$ of groups, where $G$ is locally compact and $K$ is compact, such that the convolution algebra on the space $L^1(K\backslash G /K)$ of Haar-integrable bi-$K$-invariant functions on $G$ is abelian.  Riemannian symmetric pairs provide the most important examples of Gelfand spaces.  A \textbf{spherical function} on $G$ is function $\phi\in C(G)$ such that 
  \begin{equation}
  \label{sphericalDefinition}
  \int_K \phi(xky)dk = \phi(x)\phi(y)
  \end{equation}
for all $x,y\in G$, where again integration over the compact group $K$ is with the normalized Haar measure.  An irreducible  unitary representation $(\pi,\cH)$ of $G$ is said to be a \textbf{spherical representation} if $\cH^K\neq \{0\}$, where $\cH^K$ is the space of all vectors $v\in \cH$ such that $\pi(k)v=v$ for all $k\in K$.  In fact, for a Gelfand pair $(G,K)$, one can show that $\dim \cH^K= 1$ for every irreducible unitary spherical representation $(\pi,\cH)$ of $G$.  

One can show that the positive-definite spherical functions on $G$ are precisely the matrix coefficients
\[
\phi_\pi(g) = \langle \pi(g)v,v\rangle,
\]
where $\pi$ is an irreducible unitary spherical representation of $G$ and $v$ is a unit vector in $\cH^K\backslash\{0\}$.  This connection between spherical functions and spherical representations allows one to determine the Plancherel decomposition of the quasiregular representation of $G$ on $L^2(G/K)$. Proofs of these classical theorems may be found, for instance, in \cite{vanDijk}.

Suppose one has an increasing family of locally compact groups $\{G_n\}_{n\in\N}$ and an increasing family of compact groups $\{K_n\}_{n\in\N}$ such that $K_n\leq G_n$ for each $n\in\N$ and $(G_n,K_n)$ is a Gelfand pair.  Let $G_\infty = \varinjlim G_n$ and $K_\infty = \varinjlim K_n$.  If we make the additional assumption that $G_n\cap K_{n+1} = K_n$ for all $n\in\N$, then there is a well-defined $G_n$-equivariant inclusion $G_n/K_n \rightarrow G_{n+1}/K_{n+1}$ given by $gK_n\mapsto gK_{n+1}$ for all $g\in G_n$, and we can write $G_\infty/K_\infty = \varinjlim G_n/K_n$.  In this case we say that $(G_\infty,K_\infty)$ is a \textbf{direct-limit spherical pair}.  This definition provides a natural infinite-dimensional generalization of the notion of a Gelfand pair.

It is natural to say that an irreducible unitary representation $(\pi,\cH)$ of $G_\infty$ is a \textbf{spherical representation} if the space $\cH^{K_\infty}$ of $K_\infty$-fixed vectors in $\cH$ is nontrivial.  It is possible to show  (see \cite[Theorem 23.6]{Ol1990}) that, just as for finite-dimensional Gelfand pairs, $\dim \cH^{K_\infty} = 1$ for every irreducible unitary spherical representation $(\pi,\cH)$.

The proper definition of a spherical function is slightly more subtle, because there is no Haar measure on $K_\infty$ over which to integrate.   However, because $K_\infty$ is a direct limit of compact groups, it is amenable, and we may generalize (\ref{sphericalDefinition}) by replacing the Haar measure on $K$ with an invariant mean $\mu$ on $K_\infty$.  That is, we say that a continuous function $\phi\in G_\infty$ is a \textbf{spherical function with respect to $\mu$} if 
\begin{equation}
\label{sphericalDefinitionInfinite}
    \int_{K_\infty} \phi(xky) d\mu(k) = \phi(x)\phi(y)
\end{equation}
for all $x,y\in G$. 

At this point, we remind the reader that, if we define for each $n\in \N$ a mean $\mu_n\in\gM(\RUC(K_\infty))$ by $\mu_n(f) = \int_{K_n} (f|_{K_n})(k) dk$, then any weak-$*$ cluster point of $\{\mu_n\}_{n\in\N}$ is an invariant mean on $K_\infty$.  While this sequence has many cluster points, none of which may be constructed as functionals on all of $\RUC(K_\infty)$ without recourse to the Axiom of Choice, what we can say is that any such $K_\infty$-invariant mean $\mu$ that is a weak-$*$ cluster point of $\{\mu_n\}_{n\in\N}$ must have the property that
\[
\mu(f) = \lim_{n\rightarrow \infty} \int_{K_n} (f|_{K_n})(k) dk
\]
for all functions $f\in\RUC(K_\infty)$ such that the limit on the right-hand side of the equation exists.  We now fix such invariant mean $\mu$ in the closure of $\{\mu_n\}_{n\in\N}$.  For any spherical function $\varphi\in C(G_\infty)$, it follows that if $\varphi$ satisfies
\begin{equation}
\label{limitSpherical}
\lim_{n\rightarrow\infty}\int_{K_n} \varphi(xky) dk = \varphi(x)\varphi(y),
\end{equation}
then $\varphi$ is spherical for $\mu$.

In fact, Olshanski defines a function $f\in C(G_\infty)$ to be spherical if it satisfies (\ref{limitSpherical}).  Note that this condition is stronger than requiring $f$ to be spherical for every invariant mean in the closure of $\{\mu_n\}_{n\in\N}$. However, we will show in Theorem~\ref{sphericalEquivalence} that these two conditions are in fact equivalent.

First we need a lemma about projection operators.  If $G$ is a topological group, $K$ is a compact subgroup, and $(\pi,\cH)$ is any unitary representation of $G$, then the orthogonal projection operator $P:\cH\rightarrow \cH^K$ may be written as
\[
P(v)=\int_K \pi(k)v dk
\]
for all $v\in \cH$.  In fact, using invariant means it is possible to describe the projection operator $P:\cH\rightarrow \cH^{K_\infty}$ for a unitary representation $(\pi,\cH)$ of $G_\infty$ in a completely analogous fashion, as the next lemma shows.  The proof is extremely similar to the proof in the finite-dimensional context, although some care must be taken due to the fact that means do not satisfy the same properties as integrals.

\begin{lemma}
\label{operatorValuedMean}
Suppose that $(\pi,\cH)$ is a unitary representation of a group $G$ and that $K$ is a subgroup of $G$ that is amenable.  Let $\mu$ be an invariant mean in $\gM(\RUC(K_\infty))$.  Then there is a bounded operator $P\in \mathrm{B}(\cH)$ such that
\[
\langle w, Pv \rangle = \int_K \langle w, \pi(k)v\rangle d\mu(k)
\]
for all $v,w\in \cH$.  Furthermore, $P$ is a projection from $\cH$ onto $\cH^K$.  Finally, $P$ is the orthogonal projection if $\mu$ is an inversion-invariant mean.
\end{lemma}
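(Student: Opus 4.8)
The plan is to define $P$ via the sesquilinear form $B(w,v) = \int_K \langle w,\pi(k)v\rangle\,d\mu(k)$ and appeal to the Riesz representation theorem to realize it as a bounded operator. First I would check that for fixed $v,w$ the function $k\mapsto \langle w,\pi(k)v\rangle$ actually lies in $\RUC(K)$ (using strong continuity of $\pi$ and the fact that $\pi$ is unitary, so $\|\pi(k)v\|=\|v\|$ is bounded), so that the mean is defined. Then $B$ is linear in $w$, conjugate-linear in $v$, and $|B(w,v)| \le \mu(|\langle w,\pi(\cdot)v\rangle|) \le \|w\|\,\|v\|$ by the inequality $|\mu(f)|\le\|f\|_\infty$ noted in Section~\ref{meansSection}. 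Hence for each $v$ the map $w\mapsto \overline{B(w,v)}$ is a bounded linear functional, so there is a unique $Pv\in\cH$ with $\langle w,Pv\rangle = B(w,v)$, and $v\mapsto Pv$ is linear and bounded with $\|P\|\le 1$.

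Next I would show $\operatorname{ran}P\subseteq\cH^K$. For $k_0\in K$ and any $w$, using unitarity of $\pi(k_0)$ and then left-invariance of $\mu$ on the function $k\mapsto\langle\pi(k_0)^{-1}w,\pi(k)v\rangle = \langle w,\pi(k_0k)v\rangle$ (this is $L_{k_0^{-1}}$ applied to $k\mapsto\langle w,\pi(k)v\rangle$, and invariance of $\mu$ means $\mu(L_{k_0}f)=\mu(f)$), one gets $\langle w,\pi(k_0)Pv\rangle = \langle\pi(k_0)^{-1}w,Pv\rangle = B(\pi(k_0)^{-1}w,v) = \mu\big(k\mapsto\langle w,\pi(k_0k)v\rangle\big) = \mu\big(k\mapsto\langle w,\pi(k)v\rangle\big) = \langle w,Pv\rangle$. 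Since $w$ is arbitrary, $\pi(k_0)Pv=Pv$, so $Pv\in\cH^K$. Conversely, if $v\in\cH^K$ then $\langle w,\pi(k)v\rangle = \langle w,v\rangle$ is constant in $k$, and $\mu(\mathbf 1)=1$ gives $\langle w,Pv\rangle = \langle w,v\rangle$, so $Pv=v$. Combining these two facts, $P$ is idempotent and its range is exactly $\cH^K$, i.e.\ $P$ is a (not necessarily orthogonal) projection onto $\cH^K$.

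Finally, for the inversion-invariant case, I would show $P$ is self-adjoint, which together with idempotency forces it to be the orthogonal projection. Compute $\langle Pw,v\rangle = \overline{\langle v,Pw\rangle} = \overline{B(v,w)} = \overline{\mu\big(k\mapsto\langle v,\pi(k)w\rangle\big)}$; writing $\overline{\langle v,\pi(k)w\rangle} = \langle\pi(k)w,v\rangle = \langle w,\pi(k)^{-1}v\rangle = \langle w,\pi(k^{-1})v\rangle$, this equals $\mu\big(k\mapsto\langle w,\pi(k^{-1})v\rangle\big)$, which by inversion-invariance of $\mu$ (the transformation $f\mapsto f^\vee$) equals $\mu\big(k\mapsto\langle w,\pi(k)v\rangle\big) = \langle w,Pv\rangle$. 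Hence $\langle Pw,v\rangle = \langle w,Pv\rangle$ for all $v,w$, so $P=P^*$, and an idempotent self-adjoint operator is an orthogonal projection.

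The main obstacle I anticipate is purely bookkeeping with the conjugations and with the fact that $\mu$ is not a genuine measure: one must consistently phrase every manipulation in terms of the allowed operations on means (linearity, positivity, $|\mu(f)|\le\|f\|_\infty$, left-invariance, and — in the last part — inversion-invariance), never using $\sigma$-additivity or Fubini-type arguments, and one must verify at each step that the functions to which $\mu$ is applied genuinely lie in $\RUC(K)$. No estimate here is delicate; the care is entirely in staying inside the toolkit that means actually provide.
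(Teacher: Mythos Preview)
Your proposal is correct and follows essentially the same route as the paper's proof: define $P$ via the Riesz representation theorem applied to the bounded form given by the mean of matrix coefficients, use left-invariance of $\mu$ to show the range lies in $\cH^K$, use $\mu(\mathbf 1)=1$ to show $P$ fixes $\cH^K$, and use inversion-invariance for self-adjointness. The only slip is cosmetic: since $B(w,v)$ is already linear in $w$, the functional $w\mapsto B(w,v)$ (not its conjugate) is the bounded linear functional to which Riesz applies.
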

\begin{proof}
 Recall that the matrix coefficient function $k\mapsto \langle w,\pi(k)v\rangle$ is uniformly continuous for all $v,w\in \cH$.  Now fix $v\in \cH$. Because $\pi$ is unitary, we have that $ |\langle w, \pi(k)v\rangle|\leq ||w|| ||v||$ for all $k\in K$ and $w\in \cH$.  Hence
\begin{equation}
\label{boundedness}
\left| \int_K \langle w, \pi(k)v\rangle  d\mu(k) \right| \leq ||w|||v||
\end{equation}
because $\mu$ is a mean.  Thus the map $w\mapsto  \int_K \langle w, \pi(k)v\rangle  d\mu(k)$ defines a bounded linear functional on $\cH$ and by the Riesz representation theorem there is a unique vector $Pv\in \cH$ such that 
\[
\langle w, Pv \rangle = \int_K \langle w, \pi(k)v\rangle d\mu(k)
\]
for all $w\in \cH$.  It is clear that $v\mapsto Pv$ is linear.  Furthermore, $P\in\mathrm{B}(\cH)$ by (\ref{boundedness}).

Next we see that $Pv\in \cH^K$ for all $v\in\cH$.  In fact, for all $h\in K$, we have that 
\begin{align*}
\langle w, \pi(h)Pv\rangle &= \langle \pi(h^{-1})w,Pv \rangle \\
            &= \int_K \langle \pi(h^{-1})w, \pi(k)v \rangle d\mu(k) \\
            &= \int_K \langle w, \pi(hk)v \rangle d\mu(k)\\
            &= \int_K \langle w, \pi(k)v \rangle d\mu(k)
            = \langle w, Pv\rangle,
\end{align*}
and thus $\pi(h)Pv= Pv$ for all $h\in K$.  Similarly, $Pv = v$ for all $v\in \cH^K$.  In fact,
\begin{align*}
\langle w, Pv\rangle   &= \int_K \langle w, \pi(k)v \rangle d\mu(k) \\
            &= \int_K \langle w, v \rangle d\mu(k) = \langle w,v\rangle 
\end{align*}
for all $w\in \cH$ and $v\in \cH^K$.  Thus $P$ is a projection onto $\cH^K$.

It only remains to be shown that $P$ is self-adjoint if $\mu$ is inversion-invariant. For any $v,w\in \cH$, we have
\begin{align*}
\langle w, Pv \rangle &= \int_K \langle w, \pi(k)v\rangle d\mu(k) \\
    &= \int_K \langle \pi(k^{-1})w, v\rangle d\mu(k) \\
    &= \int_K \langle \pi(k)w, v\rangle d\mu(k) \\
      &= \int_K \langle \overline{ v, \pi(k)w}\rangle d\mu(k) \\
      &=\overline{ \int_K \langle  v, \pi(k)w\rangle d\mu(k)} = \overline{\langle v, Pw\rangle},    
\end{align*}
where we have used the fact that $\mu$ is inversion-invariant and that $\mu(\overline{f}) = \overline{\mu(f)}$ for all $f\in \RUC(K)$.
\end{proof}

We are now ready to show that the condition (\ref{sphericalDefinitionInfinite}) is independent of the choice of mean.  Again, the proof is almost entirely analogous to the proof in the finite-dimensional context.  We remark that Olshanski showed in \cite[Theorem 23.6]{Ol1990} that conditions (3) and (4) in the following theorem are equivalent using a different method.

\begin{theorem}
\label{sphericalEquivalence}
Suppose that $(G_\infty,K_\infty)$ is a direct-limit Gelfand pair, and let $\varphi:G\rightarrow\C$ be a positive-definite function such that $\varphi(e) =1$.  Then the following are equivalent:
\begin{enumerate}
\item $\varphi$ is spherical for every invariant mean $\mu\in\gM(\RUC(K_\infty))$.
\item There exists an invariant mean $\mu\in\gM(\RUC(K_\infty))$ with respect to which $\varphi$ is spherical.
\item There exists an irreducible unitary spherical representation $(\pi,\cH)$ of $G_\infty$ such that
\[
   \varphi(g) = \langle \pi(g)v, v\rangle,
\]
where $v\in \cH^{K_\infty}$ is a unit vector.

\item $\varphi$ satisfies 
\[ \lim_{n\rightarrow \infty} \int_{K_n} \phi(xky) dk = \phi(x)\phi(y).\]
\end{enumerate}
\end{theorem}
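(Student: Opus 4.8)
The plan is to prove the cycle of implications $(3)\Rightarrow(1)\Rightarrow(2)\Rightarrow(4)\Rightarrow(3)$, relying on Lemma~\ref{operatorValuedMean} for the main step and on the classical theory of Gelfand pairs (together with the one-dimensionality of $\cH^{K_\infty}$ cited from \cite{Ol1990}) for the rest. The implication $(1)\Rightarrow(2)$ is trivial since invariant means on $\RUC(K_\infty)$ exist by amenability of $K_\infty$ (it is a direct limit of compact groups). The implication $(2)\Rightarrow(4)$ is also essentially immediate: if $\varphi$ is spherical for some invariant mean $\mu$ that arises as a weak-$*$ cluster point of $\{\mu_n\}$, then along a subnet $\int_{K_n}\varphi(xky)\,dk\to\varphi(x)\varphi(y)$; but the discussion preceding the theorem, applied to the function $k\mapsto\varphi(xky)$ (which lies in $\RUC(K_\infty)$ for fixed $x,y$), shows $\mu$ agrees with $\lim_n\int_{K_n}$ whenever that limit exists, so one must still argue the full sequential limit exists. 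I would handle this by passing through $(3)$: once we know the limit exists and equals $\varphi(x)\varphi(y)$ for \emph{every} invariant mean, sequential convergence follows because all cluster points of the bounded sequence $\{\int_{K_n}\varphi(xky)\,dk\}$ coincide. So the cleaner route is to prove $(2)\Rightarrow(3)$ directly and then $(3)\Rightarrow(4)$ and $(3)\Rightarrow(1)$.

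For $(2)\Rightarrow(3)$, suppose $\varphi$ is spherical for the invariant mean $\mu$. Since $\varphi$ is positive-definite with $\varphi(e)=1$, the GNS construction yields an irreducible (after decomposing) unitary representation $(\pi,\cH)$ with cyclic unit vector $v$ such that $\varphi(g)=\langle\pi(g)v,v\rangle$; I would first reduce to the irreducible case by a standard argument using that the sphericality equation is quadratic in $\varphi$ and respects the decomposition, exactly as in the finite-dimensional theory in \cite{vanDijk}. Apply Lemma~\ref{operatorValuedMean} to get the projection $P:\cH\to\cH^{K_\infty}$ with $\langle w,Pv\rangle=\int_{K_\infty}\langle w,\pi(k)v\rangle\,d\mu(k)$. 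The sphericality identity $\int_{K_\infty}\varphi(xky)\,d\mu(k)=\varphi(x)\varphi(y)$ rewrites, after expanding $\varphi$ as a matrix coefficient and using invariance of $\mu$ to move the $x$ and $y$ out, as $\langle\pi(x^{-1})v,\,P\,\pi(y)v\rangle=\langle\pi(x^{-1})v,v\rangle\langle v,\pi(y^{-1})v\rangle$ for all $x,y$; here I would need the mild technical check that the matrix coefficient $k\mapsto\langle\pi(x^{-1})v,\pi(k)\pi(y)v\rangle$ lies in $\RUC(K_\infty)$ so that $\mu$ applies. Since the $\pi(x)v$ span a dense subspace (cyclicity), this forces $Pu=\langle u,v\rangle\,v'$ for a suitable relation, and in particular $v\in\cH^{K_\infty}$ and $\cH^{K_\infty}\neq\{0\}$; combined with $\dim\cH^{K_\infty}=1$ from \cite{Ol1990}, one concludes $Pu=\langle u,v\rangle v$, which is precisely the statement that $\varphi$ is the spherical matrix coefficient, giving $(3)$.

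For $(3)\Rightarrow(1)$: given the spherical representation, for \emph{any} invariant mean $\mu$ Lemma~\ref{operatorValuedMean} produces a projection $P_\mu$ onto $\cH^{K_\infty}$, which is one-dimensional, so $P_\mu$ is forced to be the orthogonal projection $u\mapsto\langle u,v\rangle v$ regardless of $\mu$; then $\int_{K_\infty}\varphi(xky)\,d\mu(k)=\langle\pi(x^{-1})v,P_\mu\pi(y)v\rangle=\langle\pi(x^{-1})v,v\rangle\langle v,\pi(y^{-1})v\rangle=\varphi(x)\varphi(y)$, so $\varphi$ is spherical for every $\mu$. For $(3)\Rightarrow(4)$: apply this to each $\mu_n$ (these are invariant means for $K_n$, and although not invariant for $K_\infty$, the same projection computation on $\cH$ restricted to $K_n$-invariance combined with $\cH^{K_\infty}\subseteq\cH^{K_n}$ gives $\int_{K_n}\varphi(xky)\,dk\to\varphi(x)\varphi(y)$ provided the $K_n$-projections converge to $P$ on the relevant vectors) — more carefully, I would use that $\int_{K_n}\varphi(xky)\,dk=\langle\pi(x^{-1})v,P_n\pi(y)v\rangle$ where $P_n$ is the orthogonal projection onto $\cH^{K_n}$, and $P_n\to P$ strongly because $\cH^{K_\infty}=\bigcap_n\cH^{K_n}$ and the $\cH^{K_n}$ are decreasing, so strong convergence of orthogonal projections onto a decreasing intersection is standard. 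The main obstacle I anticipate is exactly this last point: controlling the convergence $P_n\to P$ and ensuring the interchange of the mean with the matrix coefficient is legitimate (the $\RUC$ membership checks), since means are not countably additive and one cannot invoke dominated convergence — the decreasing-projections fact is what saves it, and verifying $\cH^{K_\infty}=\bigcap_n\cH^{K_n}$ uses $K_\infty=\bigcup_n K_n$ together with strong continuity of $\pi$.
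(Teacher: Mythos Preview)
Your overall architecture --- GNS for $(2)\Rightarrow(3)$, Lemma~\ref{operatorValuedMean} for the projection, and the strong convergence $P_n\to P$ of the orthogonal projections onto the decreasing chain $\cH^{K_n}$ for $(3)\Rightarrow(4)$ --- is exactly the paper's. The implications $(1)\Rightarrow(2)$ and $(4)\Rightarrow(2)$ are handled the same way as well.

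There is, however, a genuine circularity in your $(2)\Rightarrow(3)$. You invoke \cite{Ol1990} for $\dim\cH^{K_\infty}=1$, but that result is stated for \emph{irreducible} spherical representations, and irreducibility is precisely the content of $(3)$ that you are trying to establish; the GNS representation is a priori only cyclic. Your parenthetical plan to ``reduce to the irreducible case by a standard argument'' beforehand does not work here: there is no direct-integral decomposition available for $G_\infty$, and the classical texts you cite prove irreducibility of the GNS representation \emph{directly}, not by decomposing first. The fix is simple and is essentially already present in your own computation: once you have $\langle \pi(x^{-1})v,\,P\pi(y)v\rangle=\varphi(x)\varphi(y)$ for all $x,y$, set $y=e$ to get $Pv=v$ (by cyclicity), so $v\in\cH^{K_\infty}$; then the same identity and cyclicity give $P(\pi(y)v)=\varphi(y)v$ for all $y$, so $\mathrm{range}\,P=\C v$ and hence $\dim\cH^{K_\infty}=1$ \emph{without} appealing to \cite{Ol1990}. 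Irreducibility then follows from the standard argument: any closed invariant subspace $W$ and its complement $W^\perp$ are $K_\infty$-invariant, so $v=v_W+v_{W^\perp}$ with both summands in $\cH^{K_\infty}$; one of them must vanish, and cyclicity of $v$ finishes it. The paper derives $v\in\cH^{K_\infty}$ slightly differently (by first showing $\varphi$ is bi-$K_\infty$-invariant from the functional equation), but the endpoint is the same.

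A smaller point in your $(3)\Rightarrow(1)$: the assertion that $P_\mu$ is ``forced to be the orthogonal projection $u\mapsto\langle u,v\rangle v$'' because $\dim\cH^{K_\infty}=1$ is not justified as written --- idempotents onto a one-dimensional subspace are far from unique. What the paper does (and what you need) is to compute the scalar directly: if $P_\mu(\pi(y)v)=c\,v$, then $c=\langle P_\mu(\pi(y)v),v\rangle=\int_{K_\infty}\langle\pi(ky)v,v\rangle\,d\mu(k)=\int_{K_\infty}\langle\pi(y)v,\pi(k^{-1})v\rangle\,d\mu(k)=\langle\pi(y)v,v\rangle=\varphi(y)$, using only that $v$ is $K_\infty$-fixed. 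This yields the spherical identity for every invariant mean $\mu$ with no appeal to orthogonality of $P_\mu$.
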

\begin{proof}
Because $\varphi$ is a positive-definite function with $\varphi(e) = 1$, we can use the Gelfand-Naimark-Segal construction to construct a unitary representation $(\pi,\cH)$ of $G_\infty$ and a cyclic unit vector $v\in \cH$ such that $\varphi(g) = \langle \pi(g)v,v\rangle$.

That (1)$\implies$(2) is clear.  To prove that (2)$\implies$(3), suppose that $\varphi$ is spherical for an invariant mean $\mu$.  Then we claim that $\varphi$ is right-$K_\infty$-invariant.  In fact, we have that 
\begin{align*}
\varphi(xh)
            &= \int_{K_\infty} \varphi(xhk)d\mu(k) \\
            &=  \int_{K_\infty} \varphi(xh)d\mu(k) 
            = \varphi(x)
\end{align*}
for any $h\in K_\infty$, where we use that $\varphi(e)=1$.
The proof that $\varphi$ is right-$K_\infty$-invariant is identical.  It follows that 
\begin{align*}
\langle \pi(k)v,\pi(g)v\rangle &= \langle \pi(g^{-1}k)v,  v\rangle \\
                               &= \langle \pi(g^{-1})v,  v\rangle = \langle v, \pi(g)v\rangle
\end{align*}
for all $g\in G$ and $k\in K$.  Because $v$ is a cyclic vector in $\cH$, we see that $\pi(k)v=v$ for all $k\in K$. 

It remains to be shown that $\dim \cH^{K_\infty} = 1$ and thus that $\pi$ is irreducible. But
\begin{align*}
   \left\langle  P(\pi(y)v),\pi( x^{-1})v\right\rangle  
        & = \int_K \langle  \pi(k)\pi(y)v, \pi(x^{-1})v \rangle d\mu(k) \\
        & = \int_K \phi(xky) d\mu(k)  \\
        & = \varphi(x)\varphi(y) \\
        & = \langle  \varphi(y)v, \pi(x^{-1})v \rangle
\end{align*}
for all $x,y\in G$, where $P:\cH\rightarrow\cH^{K_\infty}$ is the projection operator defined in Lemma~\ref{operatorValuedMean}. Because $v$ is cyclic, it follows that $P(\pi(y)v) = \varphi(y)v$ for all $y\in G$.  Using again the fact that $v$ is cyclic, we see that $\dim (\text{range } P) = 1$.  In other words, $\dim \cH^{K_\infty} = 1$, and thus $\cH$ is irreducible.

Finally we prove (3)$\implies$(1).  Suppose that $\pi$ is an irreducible spherical representation with $v\in\cH^{K_\infty}$ and that $\mu$ is an invariant mean in $\gM(\RUC(K_\infty))$. We need to show that $\langle \varphi(g)v, v\rangle$ is spherical with respect to $\mu$.  

As before, we consider the projection $P:\cH\rightarrow \cH^{K_\infty}$ defined in Lemma~\ref{operatorValuedMean}. Since $P(\pi(y)v)\in \cH^K$ and $\mathrm{dim} \cH^K = 1$, it follows that $P(\pi(y)v) = cv$ for some nonzero $c\in\C$.  But then
\begin{align*}
   c & = \langle P(\pi(y)v), v \rangle \\
     & = \int_{K_\infty} \langle \pi(ky)v, v\rangle d\mu(k)\\
     & =  \langle \pi(y)v,\pi(k^{-1})v\rangle = \langle \pi(y)v,v\rangle,
\end{align*}
since $v$ is $K_\infty$-invariant.
Hence
\begin{align*}
\int_{K_\infty} \varphi(xky) dk & = \int_{K_\infty} \langle  \pi(xky) v,v\rangle d\mu(k)\\
        & = \left\langle \int \pi(k) \pi(y)v, \pi(x^{-1})v\ d\mu(k) \right\rangle\\
        & = \left\langle P(\pi(y)v), \pi(x^{-1})v \right\rangle\\
        & = \left\langle \langle \pi(y)v,v\rangle v, \pi(x^{-1})v \right\rangle\\
        & = \left\langle  \pi(x)v, v\right\rangle \left\langle  \pi(y)v, v\right\rangle\\
        & = \varphi(x)\varphi(y).
\end{align*}
Thus $\phi$ is spherical for $\mu$.

We have already seen that (4)$\implies$(2) (see the discussion surrounding (\ref{limitSpherical})).
 Finally, we demonstrate that (3)$\implies$(4).  Suppose that $\varphi(g) = \langle \pi(g)v,v\rangle$, where $\pi$ is an irreducible spherical representation with $v\in\cH^{K_\infty}$. We know from the preceding paragraph that $\phi$ is a spherical function with respect to every invariant mean on $\RUC(K_\infty)$. 
 
 For each $n\in\N$, we consider the orthogonal projection operator $P_n:\cH\rightarrow \cH^{K_n}$, which may be written as
 \[
 P_n(v) = \int_{K_n} \pi(k)v dk
 \]
 for all $v\in\cH$.
Note also that $\cH^{K_\infty} = \cap_{n\in\N} \cH^{K_n}$.  Consider the orthogonal projection $P:\cH\rightarrow \cH^{K_\infty}$.  Then $P_n\rightarrow P$ in the strong operator topology on $\cH$.  In other words, we have that
\[
P(v) = \lim_{n\rightarrow\infty} \int_{K_n} \pi(k)v dk
\]
for all $v\in\cH$.  
Now let $\mu$ be a $K_\infty$-invariant mean on $\RUC(K_\infty)$ which is also inversion invariant.  Then from Lemma~\ref{operatorValuedMean} we have that the orthogonal projection $P:\cH\rightarrow \cH^{K_\infty}$ satisfies 
\[
\langle Pv,w\rangle = \int_{K_\infty} \langle \pi(k)v,w\rangle d\mu(k).
\]
Hence, because $\phi$ is spherical for $\mu$, we see that
\begin{align*}
\phi(x)\phi(y) &= \int_{K_\infty}\phi(xky)d\mu(k) \\
                & = \left\langle P(\pi(y)v), \pi(x^{-1})v \right\rangle\\
                & = \lim_{n\rightarrow\infty}  \int_{K_n} \langle \pi(ky)v,\pi(x^{-1})v \rangle \\
                & = \lim_{n\rightarrow\infty} \int_{K_n} \varphi(xky) d\mu(k),
\end{align*}
and we are done.
\end{proof}

 It should be mentioned that for many direct-limit Gelfand  pairs, there is a rich collection of spherical functions which have already been classified (see, for instance, \cite{DOW,Ol1990}). However, some peculiar behaviors arise in this infinite-dimensional context that do not occur in the finite-dimensional theory.   Olshanski (\cite[Corollary 23.9]{Ol1990}) has shown that for the classical direct limits of symmetric spaces (that is, those formed by direct limits of classical matrix groups with embeddings of the form $A\mapsto\left( \begin{array}{ll} A & 0 \\ 0 & 1\end{array}\right)$), the product of two spherical functions is again spherical. See also \cite{Voicu1, Voicu2} for a different proof in a special case.  We recall that the classical direct-limit groups $\SO(\infty)$, $\SU(\infty)$, and their direct products are extremely amenable.   In fact, the following corollary of Theorem~\ref{sphericalEquivalence} shows how this surprising multiplicative property of spherical functions on $G_\infty$ is related to extreme amenability of $K_\infty$.  

\begin{corollary}
If $(G_\infty,K_\infty)$ is a direct-limit Gelfand pair such that $K_\infty$ is extremely amenable, then the product of two spherical functions is again a spherical function.
\end{corollary}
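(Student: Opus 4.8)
The plan is to use the equivalences in Theorem~\ref{sphericalEquivalence} together with the fact that extreme amenability of $K_\infty$ supplies a \emph{multiplicative} invariant mean on $\RUC(K_\infty)$, namely a $K_\infty$-invariant character. Throughout, ``spherical function'' is taken in the positive-definite sense of Theorem~\ref{sphericalEquivalence}: a positive-definite $\varphi$ with $\varphi(e)=1$ is spherical exactly when it is spherical with respect to some (equivalently, every) invariant mean in $\gM(\RUC(K_\infty))$.

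First I would check that the pointwise product of two spherical functions $\varphi_1,\varphi_2$ again meets the standing hypotheses of Theorem~\ref{sphericalEquivalence}. The product $\varphi:=\varphi_1\varphi_2$ has $\varphi(e)=\varphi_1(e)\varphi_2(e)=1$, and it is positive-definite because the pointwise product of positive-definite functions on a group is positive-definite: the kernels $(g,h)\mapsto\varphi_i(g^{-1}h)$ are positive semidefinite, hence so is their Schur product, which is the kernel of $\varphi$. By condition~(2) of Theorem~\ref{sphericalEquivalence}, it therefore suffices to produce a single invariant mean $\mu\in\gM(\RUC(K_\infty))$ with respect to which $\varphi$ is spherical.

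Next I would invoke the hypothesis: since $K_\infty$ is extremely amenable there is a $K_\infty$-invariant character $\mu\in\widehat{\RUC(K_\infty)}\subseteq\gM(\RUC(K_\infty))$, and being a character it is multiplicative, $\mu(f_1f_2)=\mu(f_1)\mu(f_2)$ for all $f_1,f_2\in\RUC(K_\infty)$. Fix $x,y\in G_\infty$. Writing $\varphi_i(g)=\langle\pi_i(g)v_i,v_i\rangle$ for the irreducible unitary spherical representation $(\pi_i,\cH_i)$ and unit vector $v_i\in\cH_i^{K_\infty}$ attached to $\varphi_i$ by Theorem~\ref{sphericalEquivalence}(3), one has
\[
\varphi_i(xky)=\langle\pi_i(k)\pi_i(y)v_i,\pi_i(x^{-1})v_i\rangle,
\]
so $k\mapsto\varphi_i(xky)$ is a matrix coefficient of the unitary representation $\pi_i|_{K_\infty}$ and hence lies in $\RUC(K_\infty)$ (indeed in $\UC(K_\infty)$). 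Since $\varphi_1$ and $\varphi_2$ are spherical with respect to every invariant mean, in particular with respect to $\mu$, multiplicativity of $\mu$ yields
\begin{align*}
\int_{K_\infty}\varphi(xky)\,d\mu(k)
 &= \int_{K_\infty}\varphi_1(xky)\,\varphi_2(xky)\,d\mu(k)\\
 &= \Big(\int_{K_\infty}\varphi_1(xky)\,d\mu(k)\Big)\Big(\int_{K_\infty}\varphi_2(xky)\,d\mu(k)\Big)\\
 &= \varphi_1(x)\varphi_1(y)\,\varphi_2(x)\varphi_2(y)=\varphi(x)\varphi(y).
\end{align*}
Thus $\varphi$ is spherical with respect to $\mu$, and Theorem~\ref{sphericalEquivalence} then gives that $\varphi=\varphi_1\varphi_2$ is a spherical function.

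The step I expect to need the most care is the verification that $k\mapsto\varphi_i(xky)$ really belongs to $\RUC(K_\infty)$, so that $\mu$ may legitimately be applied to it; this is exactly where the representation-theoretic description in Theorem~\ref{sphericalEquivalence}(3) enters, since matrix coefficients of continuous unitary representations are bi-uniformly continuous. One should also confirm that ``extremely amenable'' is used in precisely the form recorded in the text (existence of a $G$-invariant character in $\widehat{\RUC(G)}$) and that such a character is automatically a mean, so that it is a genuine invariant mean to which condition~(1) of Theorem~\ref{sphericalEquivalence} applies.
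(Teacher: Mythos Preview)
Your argument is correct and essentially identical to the paper's: both use the multiplicativity of the $K_\infty$-invariant character furnished by extreme amenability to factor $\int_{K_\infty}\varphi_1(xky)\varphi_2(xky)\,d\mu(k)$ into the product of the two separate integrals, and then apply the spherical equation for each factor. Your version is more careful than the paper's in explicitly checking positive-definiteness of the product (via the Schur product) and membership of $k\mapsto\varphi_i(xky)$ in $\RUC(K_\infty)$, but the core idea is the same.
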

\begin{proof}
Suppose that $\varphi$ and $\psi$ are spherical functions on $G_\infty$.  Because $K_\infty$ is extremely amenable, there is a $K_\infty$-invariant character $\mu$ on the $C^*$-algebra $\RUC(K_\infty)$.  Then
\begin{align*}
\int_{K_\infty} (\varphi \psi)(xky) d\mu(k) &= \int_{K_\infty} \varphi(xky) d\mu(k) \int_{K_\infty} \psi(xky) d\mu(k) \\
 & = \varphi(x)\varphi(y)\psi(x)\psi(y) \\
 & = (\varphi\psi)(x)(\varphi\psi)(y)
\end{align*}
Thus $\varphi\psi$ is spherical.
\end{proof}


We end by remarking that the natural way in which invariant means may be used as a replacement for integration in the context of spherical functions suggests to the authors that there may be other opportunities to apply amenability theory to the study of representations and harmonic analysis on direct-limit groups.

\end{document}